\DeclareMathOperator{\aut}{Aut} 
\DeclareMathOperator{\soc}{soc}
\newcommand{\la}{\langle}
\newcommand{\ra}{\rangle}
\newtheorem{thm}{Theorem}
\newtheorem{cor}[thm]{Corollary}
 \newtheorem{lemma}[thm]{Lemma}
\newtheorem{prop}[thm]{Proposition} 
\newtheorem{question}[thm]{Question} \newtheorem{con}[thm]{Conjecture}
\numberwithin{equation}{section}
\renewcommand{\footnote}{\endnote}
\newcommand{\ignore}[1]{}\makeglossary
\begin{document}
	\bibliographystyle{amsplain}

\title{$p$-elements in profinite groups}
\author{Andrea Lucchini}
\address{A. Lucchini, Universit\`a di Padova, Dipartimento di Matematica ``Tullio Levi-Civita'', Via Trieste 63, 35121 Padova, Italy}
\email{lucchini@math.unipd.it}

\author{Nowras Otmen}
\address{Nowras Otmen, Universit\`a di Padova, Dipartimento di Matematica ``Tullio Levi-Civita'', Via Trieste 63, 35121 Padova, Italy}
\email{nowras.naufel@math.unipd.it}

\begin{abstract}We investigate some properties of the $p$-elements of a profinite group $G$. We prove that if $p$ is odd and the probability that a randomly chosen element of $G$ is a $p$-element is positive, then $G$ contains an open prosolvable subgroup. On the contrary, there exist groups that are not virtually prosolvable but in which the probability that a randomly chosen element of $G$ is a 2-element is arbitrarily close to 1. We prove also that if a profinite group $G$ has the property that, for every $p$-element $x$, it is positive the probability that a randomly chosen element $y$ of $G$ generates with $x$ a pro-$p$ group, then $G$ contains an open pro-$p$ subgroup.
\end{abstract}

\keywords{Profinite groups, prosolvable groups, $p$-elements.}

\thanks{Project funded by the EuropeanUnion – NextGenerationEU under the National Recovery and Resilience Plan (NRRP), Mission 4 Component 2 Investment 1.1 - Call PRIN 2022 No. 104 of February 2, 2022 of Italian Ministry of University and Research; Project 2022PSTWLB (subject area: PE - Physical Sciences and Engineering) " Group Theory and Applications"}
\maketitle

\section{Introduction}

Let $G$ be a profinite group and $p$ a prime. We say that $g$ is a $p$-element of $G$ if the order of $G$, as a supernatural number, is a power of $p$. This is equivalent to say that $\langle g\rangle N/N$ is a finite $p$-group for every open normal subgroup $N$ of $G.$ We denote by $\Omega_p(G)$ the set of the $p$-elements of $G$. It is a closed subset of $G$, so we may view $P_p(G):=\mu(\Omega_p(G)),$ where $\mu$ is the normalized Haar measure on $G$, as the probability that a randomly chosen element of $G$ is a $p$-element.
One of the aims of the paper is to investigate how the structure of a profinite $G$ is influenced by the fact that $P_p(G)$ is positive. Note that if $G$ is virtually pro-$p$, then it follows immediately that $P_p(G) > 0$. However, the converse is not true. Consider $p$ and $q$ distinct primes and $n \in \mathbb{N}$ such that $p^n$ divides $q-1$. Take the semidirect product $G=\mathbb{Z}_q \rtimes \la \gamma \ra$, where $\mathbb Z_q$ is the group of the $q$-adic integers and $\gamma$ is an automorphism of $\mathbb{Z}_q$ of order $p^n$. While $G$ clearly is not virtually pro-$p$, we can see that 
$\Omega_p(G)=(G\setminus \mathbb Z_q) \cup \{1\}$
and $P_p(G) = \frac{p^n-1}{p^n}$. This example shows that $P_p(G)$ can be arbitrarily close to 1 without this guaranteeing that $G$ is virtually pro-$p$. However, we  prove that if $p$ is an odd prime and $P_p(G)$ is positive, then $G$ must contain an open prosolvable subgroup. This is indeed a corollary of a result concerning finite groups, that we  prove using some consequences of the classification of the finite simple groups.

\begin{thm}\label{podd}
		Let $p$ be an odd prime. If $N$ is a non-solvable normal subgroup of a finite group $G,$ then $$P_p(G)\leq \frac{p\cdot P_p(G/N)}{2(p-1)}.$$
\end{thm}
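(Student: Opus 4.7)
The plan is to prove the per-coset estimate
$$|\Omega_p(G)\cap gN| \le \tfrac{p}{2(p-1)}\,|N| \qquad (\ast)$$
for every coset $gN$ whose image in $G/N$ is a $p$-element. Since every $p$-element of $G$ projects to a $p$-element of $G/N$, summing $(\ast)$ over those cosets and dividing by $|G|$ yields the inequality of the theorem.

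\emph{Reduction to $N$ minimal normal.} I would argue by induction on $|G|$. Suppose $M\trianglelefteq G$ with $1<M<N$. If $M$ is non-solvable, induction applied to $M$ gives $P_p(G)\le\tfrac{p}{2(p-1)}P_p(G/M)$, and the elementary fact $P_p(H)\le P_p(H/K)$ for every $K\trianglelefteq H$ (true since $\Omega_p(H)\subseteq\pi^{-1}(\Omega_p(H/K))$ for the quotient map $\pi$) gives $P_p(G/M)\le P_p(G/N)$. If instead $M$ is solvable, then $N/M$ is non-solvable in $G/M$; induction inside $G/M$ combined with $P_p(G)\le P_p(G/M)$ closes that case. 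We may therefore assume $N$ is a minimal normal subgroup of $G$, and being non-solvable, $N\cong T^k$ for some non-abelian finite simple group $T$.

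\emph{Per-coset reduction to a simple factor.} Fix $g$ with $gN$ of order $p^a$ in $G/N$. Conjugation by $g$ permutes the simple factors of $N$ with orbits of $p$-power size. Writing $(gn)^{p^a}=g^{p^a}\prod_{i=0}^{p^a-1} n^{g^i}$, the element $gn$ is a $p$-element of $G$ iff the right-hand side is a $p$-element of $N$. Analyzing this twisted norm orbit by orbit, and using that within a single orbit the components of $(gn)^{p^a}$ are pairwise conjugate in $T$, one shows that the condition on the coordinates of $n$ in that orbit reduces, after fixing all but one of them, to the requirement that a left-translate in $T$ lie in $\Omega_p(T)$. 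Taking the product over the orbits then gives
$$|\Omega_p(G)\cap gN| \le |N|\cdot P_p(T),$$
so $(\ast)$ reduces to
$$P_p(T)\le\tfrac{p}{2(p-1)}. \qquad (\ast\ast)$$

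\emph{The simple-group inequality.} Establishing $(\ast\ast)$ for every odd prime $p$ and every non-abelian finite simple group $T$ is the technical heart of the argument and the main obstacle, and it is where the classification of the finite simple groups enters. Morally, the factor $1/2$ comes from involutions: since $p$ is odd, a $p$-element of $T$ has odd order, and multiplying it by a commuting involution of $T$ produces an element of even order, which is not a $p$-element. The small refinement $p/(p-1)$ is needed to accommodate the extremal small cases (for instance $\mathrm{PSL}_2(p)$, where $P_p$ comes closest to the bound). I expect $(\ast\ast)$ to be verified by case analysis across the alternating, classical, exceptional, and sporadic families, using standard information on elements of prime-power order in finite simple groups.
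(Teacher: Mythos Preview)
The overall architecture --- proving the per-coset bound $(\ast)$ and summing over $p$-element cosets --- matches the paper, as does the reduction to $N$ minimal normal. There are, however, two substantive points where your argument and the paper's diverge.

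\medskip
\textbf{A gap in the reduction to a simple factor.} Your claim that the orbit-by-orbit analysis reduces to whether ``a left-translate in $T$ lies in $\Omega_p(T)$'' is not correct in general. With $N\cong T^k$ and (after modding out $C_G(N)$) $G\le\aut(T)\wr S_k$, on a single $g$-orbit of length $\ell$ one may conjugate so that $g$ acts as $\sigma(h,1,\dots,1)$ with $\sigma$ an $\ell$-cycle and $h\in\aut(T)$ a $p$-element. For $n=(s_1,\dots,s_\ell)$ in that block, $gn$ is a $p$-element iff $hs_1\cdots s_\ell$ is a $p$-element of $\langle h,T\rangle$. When $h$ is an \emph{outer} automorphism this element lies in the nontrivial coset $hT\subset\aut(T)$, not in $T$, so the quantity that actually needs to be bounded is $|\Omega_p(hT)|/|T|$, not $P_p(T)$. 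Nothing forces $h$ to be inner: already $G=\langle\alpha,T\rangle$ with $\alpha$ an outer automorphism of $T$ of $p$-power order and $g=\alpha$ gives a counterexample to your reduction. Hence $(\ast\ast)$, even if established, would not finish the proof; what is needed is
\[
\frac{|\Omega_p(hT)|}{|T|}\le\frac{p}{2(p-1)}\quad\text{for every $p$-element }h\in\aut(T),
\]
which is strictly stronger than $(\ast\ast)$.

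\medskip
\textbf{The paper's route avoids a case analysis over simple groups.} Rather than a family-by-family verification, the paper works directly in $H=\langle g,N\rangle$ (for arbitrary non-solvable $N$). The Guralnick--Malle--Navarro theorem says that for odd $p$ a Sylow $p$-subgroup of a non-solvable finite group is never self-normalizing (apart from a short $p=3$ list), so $|N_H(P):P|\ge2$ and thus $|\Omega_p(H)|\le|H|/2$. A short bijection lemma --- the map $x\mapsto x^i$ on $p$-elements gives $|\Omega_p(gN)|=|\Omega_p(g^iN)|$ for all $i$ coprime to $p$ --- then shows that the $p^a-p^{a-1}$ cosets $g^jN$ with $(j,p)=1$ carry equally many $p$-elements; comparing their union with $|\Omega_p(H)|\le|H|/2$ yields $(\ast)$ at once. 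The residual $p=3$ exceptions ($\mathrm{PSL}(2,3^{3^n})$ with a field automorphism) are handled by a direct coset computation giving $|\Omega_3(\phi S)|/|S|\le3/4$. This argument is shorter than a CFSG case split and, crucially, it proves exactly the almost-simple \emph{coset} bound that your reduction actually requires.
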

This result implies that, if $p$ is odd, then we can bound the number of non-abelian factors in a chief series of a finite group $G$  in terms of $P_p(G)$. More precisely, we have the following.

\begin{cor}\label{non-ab-cf}
	Let $p$ be an odd prime. Given a finite group $G,$ let $t$ be the number of non-abelian factors in a chief series of $G.$ Then
	$$t\leq \log_{\frac{2(p-1)}{p}}\left(\frac{1}{P_p(G)}\right).
	$$
\end{cor}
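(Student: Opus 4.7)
The plan is a clean induction on $t$, with Theorem~\ref{podd} doing all the real work as a one-step reduction; the deep content has already been extracted in that theorem, so the corollary is essentially bookkeeping and there is no serious obstacle.

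First, I would observe that for any odd prime $p$ we have $\frac{2(p-1)}{p}\geq \frac{4}{3}>1$, so the base of the logarithm is larger than $1$ and the function $\log_{2(p-1)/p}$ is increasing on $(0,\infty)$. Consequently the desired inequality is equivalent to
$$P_p(G)\leq \left(\frac{p}{2(p-1)}\right)^{t}.$$
The base case $t=0$ is immediate: then $G$ is solvable and there is nothing to prove since $P_p(G)\leq 1$.

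For the inductive step, I would fix a chief series $1=G_0\lhd G_1\lhd\cdots\lhd G_n=G$ with exactly $t\geq 1$ non-abelian factors, and let $i$ be the smallest index for which $G_i/G_{i-1}$ is non-abelian. Since every chief factor of $G$ below $G_i$ is then abelian, the subgroup $G_{i-1}$ is solvable, so $G_i$ is a non-solvable normal subgroup of $G$. Applying Theorem~\ref{podd} with $N=G_i$ gives
$$P_p(G)\leq \frac{p}{2(p-1)}\,P_p(G/G_i).$$

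Finally, the series $1=G_i/G_i\lhd G_{i+1}/G_i\lhd\cdots\lhd G_n/G_i=G/G_i$ is a chief series of $G/G_i$ whose non-abelian factors correspond bijectively to the non-abelian factors $G_j/G_{j-1}$ with $j>i$, and hence number exactly $t-1$. By the inductive hypothesis applied to $G/G_i$,
$$P_p(G/G_i)\leq \left(\frac{p}{2(p-1)}\right)^{t-1}.$$
Substituting into the previous inequality yields $P_p(G)\leq \bigl(\tfrac{p}{2(p-1)}\bigr)^{t}$, and taking $\log_{2(p-1)/p}$ of both sides of the reciprocal gives the stated bound on $t$.
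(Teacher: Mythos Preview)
Your proof is correct, and it is precisely the natural derivation the paper has in mind: the corollary is stated there without an explicit proof, as an immediate consequence of Theorem~\ref{podd}, and your induction on $t$ (peeling off the lowest non-abelian chief factor and applying Theorem~\ref{podd} with $N=G_i$) is exactly how one spells that out.
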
	

If $N$ is an open normal subgroup of a profinite group $G,$ then
$P_p(G)\leq P_p(G/N).$ In particular, for every open normal subgroup $N$ of $G$, the number of non-abelian factors in a  chief series of $G/N$ is at most $\log_{\frac{2(p-1)}{p}}(\frac{1}{P_p(G)}).$ But then Corollary \ref{non-ab-cf} has the following consequences for profinite groups.

\begin{cor}\label{primocor}Let $G$ be a profinite group and $p$ an odd prime. If
	$P_p(G)> \frac{p}{2(p-1)},$ then $G$ is prosolvable.
\end{cor}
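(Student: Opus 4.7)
The plan is to reduce the statement about the profinite group $G$ to an assertion about all its finite continuous quotients and then apply Corollary \ref{non-ab-cf} to those quotients. Recall that $G$ is prosolvable if and only if $G/N$ is solvable for every open normal subgroup $N$ of $G$, so it suffices to fix such an $N$ and prove that $G/N$ is solvable.

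First I would invoke the inequality $P_p(G) \leq P_p(G/N)$ mentioned in the paragraph preceding Corollary \ref{primocor}; this comes from the fact that the natural projection $G \to G/N$ sends $p$-elements to $p$-elements and is measure-preserving, so the hypothesis $P_p(G) > \tfrac{p}{2(p-1)}$ transfers to every finite quotient: $P_p(G/N) > \tfrac{p}{2(p-1)}$. Next I would apply Corollary \ref{non-ab-cf} to the finite group $G/N$: letting $t$ denote the number of non-abelian factors in a chief series of $G/N$, one has
\[
t \leq \log_{\frac{2(p-1)}{p}}\!\left(\frac{1}{P_p(G/N)}\right).
\]

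Since $p$ is odd we have $p \geq 3$, whence $\tfrac{2(p-1)}{p} > 1$, so the logarithm is monotone increasing. The inequality $P_p(G/N) > \tfrac{p}{2(p-1)}$ is equivalent to $\tfrac{1}{P_p(G/N)} < \tfrac{2(p-1)}{p}$, hence
\[
t \leq \log_{\frac{2(p-1)}{p}}\!\left(\frac{1}{P_p(G/N)}\right) < 1.
\]
As $t$ is a non-negative integer this forces $t = 0$, that is, no chief factor of $G/N$ is non-abelian, so $G/N$ is solvable. Since $N$ was an arbitrary open normal subgroup, $G$ is prosolvable.

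There is essentially no real obstacle here; the statement is a clean packaging of Corollary \ref{non-ab-cf}, and the only small points to check are that $P_p$ behaves monotonically under continuous surjections onto finite quotients (which is essentially tautological from the definition of the Haar measure on a profinite group) and that the logarithm base exceeds $1$, which requires exactly the hypothesis that $p$ is odd.
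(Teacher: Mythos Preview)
Your proof is correct and follows exactly the approach the paper indicates in the paragraph preceding Corollary~\ref{primocor}: pass to an arbitrary finite quotient $G/N$, use $P_p(G)\le P_p(G/N)$, and apply Corollary~\ref{non-ab-cf} to force $t=0$. The paper leaves this corollary unproved, treating it as an immediate consequence of that paragraph, and your write-up simply makes the implicit steps explicit (in particular the observation that the base $\tfrac{2(p-1)}{p}$ exceeds $1$ precisely because $p$ is odd).
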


\begin{cor}\label{secondocor}Let $G$ be a profinite group and $p$ and odd  prime. If
	$P_p(G)>0,$ then $G$ is virtually prosolvable.
\end{cor}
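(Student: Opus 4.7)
The plan is to promote the uniform bound in Corollary \ref{non-ab-cf} into a maximality argument producing an open prosolvable subgroup of $G$.

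First, I use the hypothesis $P_p(G)>0$ to bound the complexity of every finite quotient of $G$ at once. As recorded in the excerpt just before Corollary \ref{primocor}, $P_p(G/N)\geq P_p(G)$ for every open normal subgroup $N$ of $G$, and so Corollary \ref{non-ab-cf} implies that the number $t(G/N)$ of non-abelian factors in a chief series of $G/N$ is bounded by the finite real number $\log_{2(p-1)/p}(1/P_p(G))$. Hence
$$
t^{*}:=\sup\{\,t(G/N):N\text{ is an open normal subgroup of }G\,\}
$$
is a non-negative integer, and I can fix an open normal subgroup $N_{0}$ of $G$ realising $t(G/N_{0})=t^{*}$.

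The heart of the argument is the following claim: for every open normal subgroup $N$ of $G$ with $N\subseteq N_{0}$, the finite quotient $N_{0}/N$ is solvable. To prove it, I refine the series $\{1\}\unlhd N_{0}/N\unlhd G/N$ to a chief series of $G/N$. The factors above $N_{0}/N$ are obtained by pulling back a chief series of $G/N_{0}$ through the canonical projection $G/N\twoheadrightarrow G/N_{0}$, and these pulled-back factors are themselves chief factors of $G/N$ because the correspondence between subgroups of $G/N$ containing $N_{0}/N$ and subgroups of $G/N_{0}$ preserves both normality and minimality. Counting non-abelian chief factors in the resulting chief series of $G/N$ gives
$$
t(G/N)=t(G/N_{0})+\bigl(\text{non-abelian chief factors of }G/N\text{ lying inside }N_{0}/N\bigr).
$$
By the maximality of $t^{*}$, $t(G/N)\leq t(G/N_{0})$, so the last summand must vanish. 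Hence every chief factor of $G/N$ inside $N_{0}/N$ is abelian, which exhibits a normal series of $N_{0}/N$ with abelian factors and shows that $N_{0}/N$ is solvable.

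Finally, I upgrade this to prosolvability of $N_{0}$ itself. If $M$ is any open normal subgroup of the profinite group $N_{0}$, then $[N_{0}:M]$ is finite and hence so is $[G:M]$, so the normal core $\bigcap_{g\in G}gMg^{-1}$ is an open normal subgroup of $G$ contained in $N_{0}$; by the claim $N_{0}/\bigcap_{g}gMg^{-1}$ is solvable, and therefore so is its quotient $N_{0}/M$. Every continuous finite quotient of $N_{0}$ being solvable, $N_{0}$ is prosolvable, and being open in $G$ it witnesses that $G$ is virtually prosolvable. I do not foresee a substantial obstacle: the only step that really demands care is the additivity of the non-abelian chief-factor count along the pulled-back chief series, which reduces to a careful application of the correspondence theorem.
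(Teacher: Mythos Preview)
Your argument is correct and is exactly the route the paper indicates: the paper simply records that Corollary~\ref{non-ab-cf} bounds the number of non-abelian chief factors of every finite quotient $G/N$ uniformly and declares Corollary~\ref{secondocor} as a consequence, and your maximality argument supplies precisely the standard details of that deduction.
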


The previous results are  \emph{not} true for $p=2$. In \cref{pro-2}, we explore the different behavior when $p=2$. In the almost simple group $M_{10}$, whose socle is the alternating group $A_6$, all the elements which are not in the socle are 2-elements. We use this property of $M_{10}$ in order to construct examples of profinite groups $G$ which contain a closed normal subgroup isomorphic to a cartesian product of infinitely many copies of $A_6$ but in which $P_2(G)$ is arbitrarily closed to 1. In Proposition \ref{anchepsl},  we show that $A_6\cong \text{PSL}(2,9)$ is not the unique simple group  $S$ admitting an automorphism $\alpha$ with the property that all the elements in the coset $\alpha S$ are 2-elements. We prove in Proposition \ref{onlypsl} that this exceptional behavior is in fact shared by all and only the simple groups $\text{PSL}(2,3^f)$, where $f$ is a 2-power. This result leads us to propose the following conjecture.

 \begin{con}\label{conj-prob-2-grps}Let $\Sigma$ the be infinite family of finite non-abelian simple groups ${\rm{PSL}}(2,3^f)$, with $f=2^a$ for a positive integer $a.$
	Suppose that a profinite group $G$ satisfies the condition $P_2(G)>0.$ Then either $G$ is virtually prosolvable or, for every $k \in \mathbb N$,  there exists an open normal subgroup $N$ of $G$ such that a composition series of $G/N$ has at least $k$ factors which belong to $\Sigma.$
\end{con}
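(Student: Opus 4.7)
The plan is to adapt the strategy of \cref{podd} and \cref{secondocor} to the case $p=2$, with the exceptional family $\Sigma$ absorbing the obstruction identified in \cref{onlypsl}. The conjecture splits naturally into a finite-group estimate, a transfer to the profinite setting, and a structural dichotomy.

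First I would try to prove the following finite-group analogue of \cref{podd}, which is the main ingredient: there exists an absolute constant $c<1$ such that, if $N$ is a non-solvable normal subgroup of a finite group $G$ \emph{all of whose non-abelian composition factors lie outside $\Sigma$}, then
$$P_2(G)\leq c\cdot P_2(G/N).$$
The mechanism should mirror that of \cref{podd}: reduce to a minimal non-solvable normal subgroup, then estimate the density of 2-elements in each coset of the socle inside its normalizer. For odd $p$, the constant $p/(2(p-1))$ ultimately comes from the fact that any coset $\alpha S$, with $S$ non-abelian simple and $\alpha\in\aut(S)$, contains a positive proportion of non-$p$-elements. For $p=2$ this uniform bound fails exactly on $\Sigma$, and \cref{onlypsl} guarantees that once $S\notin\Sigma$ such a coset must contain a non-2-element; what one needs is a \emph{uniform} such bound as $S$ and $\alpha$ vary, and producing this uniform $c<1$ is the main technical point.

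Iterating the Key Lemma along a chief series would then give the analogue of \cref{non-ab-cf}: denoting by $t'$ the number of non-abelian composition factors of a finite group $G$ that do not belong to $\Sigma$, one obtains
$$t'\leq \log_{1/c}\!\left(\frac{1}{P_2(G)}\right).$$
Passing to the profinite setting is now routine: for a profinite $G$ with $P_2(G)>0$, one has $P_2(G/N)\geq P_2(G)>0$ for every open normal $N$, so every finite quotient $G/N$ has at most $B:=\log_{1/c}(1/P_2(G))$ non-abelian composition factors outside $\Sigma$.

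Finally, one uses a standard dichotomy to conclude. Suppose $G$ is not virtually prosolvable; then the total number of non-abelian composition factors of $G/N$ is unbounded as $N$ ranges over open normal subgroups, since otherwise one could fix $N$ achieving the maximum $M$ and observe that for every open normal $N'\leq N$ the kernel $N/N'$ contributes no further non-abelian factor, forcing $N$ to be prosolvable and hence $G$ virtually prosolvable. Combined with the uniform bound $B$ on non-$\Sigma$ factors, this forces the number of $\Sigma$-factors in a composition series of $G/N$ to tend to infinity along a cofinal family of open normal subgroups, which is precisely the conclusion of the conjecture. The principal obstacle is clearly the Key Lemma: one must rule out, uniformly across all non-abelian simple $S\notin\Sigma$ and all $\alpha\in\aut(S)$, a density of 2-elements in $\alpha S$ tending to $1$, presumably via a case analysis based on the classification and \cref{onlypsl}, paralleling the CFSG input already used for odd $p$.
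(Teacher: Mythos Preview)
The statement you are addressing is a \emph{conjecture} in the paper, not a proved theorem; the paper does not give a proof of it. What the paper does is precisely the reduction you outline: \cref{bgam} is the coset-level density estimate for a minimal normal subgroup whose composition factor admits a uniform bound, the unlabelled theorem immediately following \cref{bgam} is the profinite transfer plus the structural dichotomy you describe, and your ``Key Lemma'' is verbatim \cref{quest}, which the paper poses as an open question and explicitly notes would imply \cref{conj-prob-2-grps}. So your proposal and the paper's approach coincide, and neither is a proof.

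You have also correctly located the genuine obstacle. \cref{onlypsl} is only qualitative: for each fixed non-abelian simple $S\notin\Sigma$ and each $\alpha\in\aut(S)$ the coset $\alpha S$ contains \emph{some} non-2-element, but this gives no lower bound on the proportion of such elements that is uniform as $S$ and $\alpha$ vary. Upgrading this to a uniform constant $c<1$ is exactly the content of \cref{quest}; the paper settles it only for alternating groups (\cref{anchealt}) and leaves the groups of Lie type open. Until that uniform bound is established, your iteration along a chief series cannot start, and the conjecture remains open.
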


To introduce the second part of the paper, we recall some definitions and results from \cite{aemp}. Let $\mathfrak{C}$ be a class of finite groups closed under taking subgroups, quotients and direct products. Given a profinite group $G$ and an element $x \in G$, the subset $\Omega_\mathfrak{C}(x,G)$ of elements of $G$ which generate a pro-$\mathfrak{C}$ subgroup together with $x$ is a closed subset of $G.$
We say that $G$ is $\mathfrak{C}$-positive if $\mu(\Omega_\mathfrak{C}(x,G))>0$ for every $x\in G$ and that
$G$ is  $\mathfrak{C}$-uniformly-positive if there exists a positive real number $c$ such that $\mu(\Omega_\mathfrak{C}(x,G))>c$ for every $x\in G.$ In \cite{aemp}, the authors prove that if $\mathfrak{C}$ is the class of finite solvable groups or the class of finite nilpotent groups, then $G$ is $\mathfrak{C}$-uniformly-positive
if and only if it is $\mathfrak{C}$-positive, and if the previous properties are satisfied then $\mu(\cap_{x\in G}\Omega_\mathfrak{C}(x,G))$ is positive. One can ask whether similar statements hold when $\mathfrak{C}$ is the class of the finite $p$-groups for a given prime $p.$ In this case we will prefer to use the notation $\Omega_p(g,G)$ to denote the subset $\{x \in G \colon \langle x,g \rangle \text{ is a pro-$p$ group}\}$ (here with the notation $\langle x,g\rangle$ we mean the subgroup of $G$ topologically generated by $x$ and $g,$ the smallest closed subgroup of $G$ which contains $x$ and $g$). Moreover, we will set $P_p(g,G) := \mu(\Omega_p(g,G))$. Clearly $P_p(g,G)=0$ if $g$ is not a $p$-element of $G$, so we need to adapt the definitions of
positive and uniformly-positive 
saying that $G$ is $p$-positive if $P_p(g,G)>0$ for every $p$-element of $G$ and that it is $p$-uniformly-positive if there exists a positive real number $c$ such that $P_p(g,G)>c$ for every $p$-element of $G$. 
Notice that if follows from Baer's theorem (see for example \cite [Theorem 2.12]{isa}), that the intersection
$$\bigcap_{g \in \Omega_p(G)} \Omega_p(g,G)$$
coincides with $O_p(G)$, defined as the largest normal pro-$p$ subgroup of $G$. Finally,  denote by $P_p(G,G)$ the probability that two randomly chosen elements in $G$ generate a pro-$p$ subgroup. The main result of the second part of the paper is of similar flavor to  \cite[Theorems 1.1 and 1.2]{aemp}.

\begin{thm}\label{main}
    Let $G$ be a profinite group and $p$ a prime. Then, the following are equivalent:
    \begin{enumerate}
        \item $G$ is $p$-positive;
        \item $G$ is $p$-uniformly-positive;
        \item $P_p(G,G) > 0$;
        \item $G$ is virtually pro-$p$;
        \item $\mu(\bigcap_{g \in \Omega_p(G)}\Omega_p(g,G)) = |G:O_p(G)|^{-1} > 0$.
    \end{enumerate}
\end{thm}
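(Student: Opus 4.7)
The plan is to close the cycle $(4)\Rightarrow(5)\Rightarrow(2)\Rightarrow(1)\Rightarrow(3)\Rightarrow(4)$; only the last step carries substantive content. For $(4)\Rightarrow(5)$, if $G$ has an open pro-$p$ subgroup $H$ then its normal core in $G$ is open, normal and pro-$p$, so $O_p(G)$ is itself open; Baer's theorem, as recalled in the introduction, then gives $\bigcap_{g\in\Omega_p(G)}\Omega_p(g,G)=O_p(G)$, whose Haar measure is $|G:O_p(G)|^{-1}$. For $(5)\Rightarrow(2)$, since $O_p(G)$ is normal pro-$p$, for every $p$-element $g$ the subgroup $\langle g\rangle O_p(G)$ is pro-$p$, so $O_p(G)\subseteq\Omega_p(g,G)$ and $P_p(g,G)\geq|G:O_p(G)|^{-1}$, a positive constant independent of $g$. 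The implication $(2)\Rightarrow(1)$ is trivial.

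For $(1)\Rightarrow(3)$ I would use Fubini. Observe that $\Omega_p(1,G)=\Omega_p(G)$ and that $\Omega_p(g,G)=\emptyset$ whenever $g\notin\Omega_p(G)$, since any $x\in\Omega_p(g,G)$ forces $g$ to be a $p$-element. Therefore
$$P_p(G,G)=\int_G P_p(g,G)\,d\mu(g)=\int_{\Omega_p(G)}P_p(g,G)\,d\mu(g).$$
Applying $(1)$ with $g=1$ gives $\mu(\Omega_p(G))>0$, and on $\Omega_p(G)$ the integrand is strictly positive. Decomposing $\Omega_p(G)=\bigcup_n\{g:P_p(g,G)>1/n\}$, some piece has positive measure, so the integral is positive.

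The main step is $(3)\Rightarrow(4)$, which I would base on the following finite-group inequality: for every finite group $H$ with Sylow $p$-subgroup $P$,
$$P_p(H,H)\leq\frac{|P|}{|H|}.$$
This is a direct Sylow count, since $\langle x,y\rangle$ is a $p$-subgroup iff both $x$ and $y$ lie in a common Sylow $p$-subgroup:
$$|\{(x,y)\in H\times H:\langle x,y\rangle\text{ is a }p\text{-group}\}|\leq n_p\,|P|^2=\frac{|H|\,|P|^2}{|N_H(P)|}\leq|H|\,|P|,$$
using $n_p=|H:N_H(P)|$ and $|N_H(P)|\geq|P|$. Assuming $(3)$, set $c=P_p(G,G)>0$ and $m=\lfloor 1/c\rfloor$. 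Each finite quotient $G/N$ satisfies $P_p(G/N,G/N)\geq c$, so its Sylow $p$-subgroups have index at most $m$. Fix a Sylow pro-$p$ subgroup $P$ of $G$; for every open normal $N$ of $G$, $PN$ is open with $|G:PN|\leq m$. As $N$ shrinks, $PN$ decreases and $|G:PN|$ weakly increases within the bound $m$, so the sequence stabilizes at some $N_0$; this forces $PN=PN_0$ for all $N\subseteq N_0$, and hence $P=\bigcap_N PN=PN_0$ is open in $G$, yielding $(4)$.

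The only substantive obstacle is the finite-group Sylow-counting inequality; once it is in hand, lifting to the profinite setting is routine because indices of open subgroups are integers, and the remaining implications are formal consequences of Baer's theorem and Fubini's theorem.
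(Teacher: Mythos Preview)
Your proof is correct and takes a genuinely different---and considerably more elementary---route than the paper's.

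The paper establishes the equivalence by proving two hard implications separately: $(1)\Rightarrow(4)$ is Theorem~\ref{virtprop}, whose proof passes through the prosolvable reduction (Corollary~\ref{virtprosolv-2}, itself relying on CFSG-based results for odd $p$ and on \cite{fgg} for $p=2$), the chief-factor estimates of Lemma~\ref{gxfinite} and Corollary~\ref{bbb}, Theorem~\ref{42}, Corollary~\ref{sylowfg}, and finally a Baire category argument; independently, $(3)\Rightarrow(4)$ is Theorem~\ref{rand-p-elem}, which invokes Wilson's result \cite{winilp} to get an open pronilpotent subgroup and then argues with profinite Sylow conjugacy. Your cycle replaces both of these. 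For $(3)\Rightarrow(4)$ you use only the elementary Sylow count $P_p(H,H)\le |P|/|H|$ for finite $H$, which bounds the $p'$-part of every finite quotient and forces a Sylow pro-$p$ subgroup to be open. For $(1)\Rightarrow(4)$ you factor through $(3)$ via Fubini: the set $\{(x,y):\langle x,y\rangle\text{ is pro-}p\}$ is closed, so $g\mapsto P_p(g,G)$ is measurable, and a strictly positive integrand on the positive-measure set $\Omega_p(G)$ gives $P_p(G,G)>0$.

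The net effect is that your argument avoids all CFSG input, the Baire category theorem, and Wilson's theorem. One minor point to tighten: the ``stabilization'' step in $(3)\Rightarrow(4)$ is really about the directed family $\{PN\}_N$ rather than a literal sequence; the clean way to phrase it is to pick $N_0$ realizing $\max_N|G:PN|$ (which exists since the indices lie in $\{1,\dots,m\}$), observe that $PN=PN_0$ for all $N\subseteq N_0$, and conclude $P=\bigcap_N PN=PN_0$. What the paper's approach buys in exchange for its length is a body of intermediate structural results (notably Theorem~\ref{42} on individual $p$-elements and the prosolvable criterion for odd $p$) that are of independent interest; your route gives the cleanest path to the equivalence itself.
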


\subsection*{Acknowledgements} We thank  Pablo Spiga for fruitful discussions and  his precious help in the proof of Proposition \ref{anchepsl}.

\section{Probability that a random element is a $p$-element}\label{prob-p-groups}

Let $G$ be a finite group and let $P$ be a Sylow $p$-subgroup of $G.$ It follows from the Sylow theorems that $\Omega_p(G)=\cup_{g\in G}P^g$ and therefore
\begin{equation}\label{upper-bound}
    P_p(G)\leq \frac{n_p(G)|P|}{|G|}=\frac{|P|}{|N_G(P)|}.
\end{equation}

The following consequence of the classification of the finite non-abelian simple groups will play a relevant role in our considerations.

\begin{thm}\cite[Theorem 1.1]{gmn}\label{norsyl} Let $p$ be an odd prime and  $P$ a Sylow $p$-subgroup of a finite
	group $G$. If $p = 3,$ assume that $G$ has no composition factors of type ${\rm{PSL}}(2,3^f),$ $f = 3^a$ with $a \geq 1.$
	If $P=N_G(P),$ then $G$ is solvable.
\end{thm}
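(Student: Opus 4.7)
This is a classification-dependent theorem (that of Guralnick, Malle and Navarro, cited here as \cite{gmn}), so any honest proof plan must reduce the problem to a case analysis of almost simple groups via CFSG. My approach would be a contradiction argument on a minimal counterexample $G$: a non-solvable finite group with $N_G(P)=P$ for some Sylow $p$-subgroup $P$ at an odd prime $p$, and, when $p=3$, no $\mathrm{PSL}(2,3^{3^a})$ composition factors.

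The first reduction step is to show that the hypothesis descends to quotients: if $N\trianglelefteq G$ then $N_{G/N}(PN/N)=PN/N$. One proves this by the standard Frattini-type trick: a preimage $g$ of a normalizer element satisfies $P^g\leq PN$, hence $P^g=P^{ph}$ for some $p\in P$, $h\in N$, whence $gph^{-1}\in N_G(P)=P$ and $gN\in PN/N$. Combined with minimality, this forces $G$ to have a unique minimal normal subgroup $M$ that is itself non-solvable, with $G/M$ solvable. Consequently $M=S_1\times\cdots\times S_k$ with all $S_i$ isomorphic to a fixed non-abelian simple group $S$, on which $G$ acts transitively by conjugation; minimality of $M$ gives $C_G(M)=1$, so $G\hookrightarrow\mathrm{Aut}(M)$.

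A further reduction, exploiting the diagonal structure of $P\cap M$ inside $S_1\times\cdots\times S_k$ and the transitive permutation action of $G$ on the $S_i$, reduces the problem to $k=1$, so $G$ is almost simple with socle $S$. The bulk of the work is then the almost simple analysis, carried out family by family: alternating groups by explicit computation with cycle structure of Sylow $p$-subgroups and their normalizers, sporadic groups by Atlas data, and groups of Lie type by far the most laborious. For Lie-type groups one splits into the defining-characteristic case (the Sylow is essentially unipotent and $N_G(P)$ involves pieces of the maximal torus plus field and graph automorphisms) and the cross-characteristic case (controlled by maximal tori, Weyl groups, and the Borel--Tits theorem).

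The main obstacle is unmistakably the Lie-type case, and specifically pinpointing why $\mathrm{PSL}(2,3^{3^a})$ at $p=3$ is the sole exception. One must track the joint action of diagonal, field and graph automorphisms on a Sylow $p$-subgroup and show that the only configuration in which everything conspires to give $N_G(P)=P$ without solvability is when the unipotent Sylow of $\mathrm{PSL}(2,3^{3^a})$ is extended by a field automorphism of order $3^a$, so that no nontrivial torus contribution remains in the normalizer. In practice I would lean heavily on existing structural results (Borel--Tits, Steinberg's description of tori, tabulated Sylow normalizers) rather than redo the root-system bookkeeping from scratch.
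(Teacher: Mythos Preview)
The paper does not prove this theorem at all: it is quoted verbatim as \cite[Theorem 1.1]{gmn} and used as a black box in the proofs of Lemma~\ref{almeno5} and Corollary~\ref{corx3}. There is therefore no ``paper's own proof'' to compare your proposal against.

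Your outline is a faithful sketch of the strategy in the original Guralnick--Malle--Navarro paper: minimal counterexample, descent of the self-normalizing condition to quotients, reduction to an almost simple group, and then a CFSG case analysis with the Lie-type families (especially cross-characteristic versus defining characteristic) carrying the real weight and isolating the $\mathrm{PSL}(2,3^{3^a})$ exception at $p=3$. As a high-level plan it is correct; the only caveat is that the reduction from a general minimal normal subgroup $M\cong S^k$ to the case $k=1$ is not quite as automatic as you suggest and in \cite{gmn} is handled with some care, but that is a matter of detail rather than a gap in the approach.
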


We will combine \ref{upper-bound} and \cref{norsyl} to bound the ratio $P_p(G)/P_p(G/N)$ when $N$ is an unsolvable normal subgroup of $G$ and $p$ is an odd prime. We need a preliminary lemma for this purpose. For any subset $X$ of $G$, we set
$\Omega_p(X)=\Omega_p(G)\cap X.$

\begin{lemma}\label{samenumber}
	Let $N$ be a normal subgroup of a finite group $G$ and $p$ a prime and assume that $G/N$ is a $p$-group. Then, for $g\in G$ and $i\in \mathbb N$ with $(p,i)=1$, the map $\rho_i: \Omega_p(gN) \to \Omega_p(g^iN)$ which sends $x$ to $x^i$ is bijective. 
\end{lemma}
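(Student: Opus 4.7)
The plan is to verify well-definedness, then prove injectivity and surjectivity separately (the map goes between finite sets, but since their sizes are not known a priori, equality of sizes is not obviously enough). For well-definedness, suppose $x \in \Omega_p(gN)$. Then $x$ is a $p$-element, hence so is $x^i$. Moreover, passing to $G/N$ gives $x^iN = (xN)^i = (gN)^i = g^iN$, so $x^i \in \Omega_p(g^iN)$.

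For injectivity, suppose $x, y \in \Omega_p(gN)$ satisfy $x^i = y^i$. Since $x$ is a $p$-element, $|x|$ is a power of $p$, and $(i,p)=1$ gives $(i, |x|) = 1$, so $\langle x^i \rangle = \langle x \rangle$; likewise $\langle y^i \rangle = \langle y \rangle$. Therefore $\langle x \rangle = \langle y \rangle$, and in particular $|x| = |y| = p^k$ for some $k$. Choosing $j$ with $ij \equiv 1 \pmod{p^k}$ yields $x = x^{ij} = (x^i)^j = (y^i)^j = y$.

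For surjectivity, which is the subtler step, let $y \in \Omega_p(g^iN)$ and let $p^m$ be the order of $gN$ in $G/N$ (a power of $p$ by hypothesis); set $p^k = |y|$ and let $M = \max(k,m)$. Since $(i,p) = 1$, we can pick $j$ with $ij \equiv 1 \pmod{p^M}$. Define $x := y^j$. Then $x$ is a $p$-element as a power of one. Also $x^i = y^{ij} = y$ because $p^k \mid ij - 1$. Finally, to place $x$ in the correct coset, observe that $yN = g^iN$ implies $x N = y^jN = g^{ij}N = gN$, where the last equality uses $p^m \mid ij - 1$. Hence $x \in \Omega_p(gN)$ with $\rho_i(x) = y$, completing the proof. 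The main subtlety is precisely this last step: one must choose the inverse exponent $j$ modulo something that simultaneously controls $|y|$ and the order of $gN$ in $G/N$, which is exactly where the assumption that $G/N$ is a $p$-group is used.
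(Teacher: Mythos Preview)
Your proof is correct and follows essentially the same approach as the paper's: injectivity via a multiplicative inverse of $i$ modulo the common $p$-power order, and surjectivity by setting $x=y^j$ for $j$ inverse to $i$ modulo a suitable $p$-power. The only cosmetic difference is that your $M=\max(k,m)$ is in fact just $k$, since the order $p^m$ of $gN=g^iN$ divides $|y|=p^k$; the paper uses this observation directly.
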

\begin{proof}
	First we prove that $\rho_i$ is injective. Assume $\rho_i(x)=\rho_i(y).$ Then
	$x^i=y^i.$ In particular $|x^i|=|y^i|$ and since $x$ and $y$ are $p$-elements and $(p,i)=1,$ it follows that $|x|=|y|=p^r$ for some $r\in \mathbb N.$ Since $(i,p^r)=1,$ there exist two integers $a$ and $b$ such that $ai+bp^r=1$ and, thus, $x=x^{ai}=y^{ai}=y.$ Now, assume $z = g^in\in \Omega_p(g^iN)$ and let $|z|=p^s.$ Choose integers $a, b$ such that $ia+p^sb=1.$ Notice that
 $p^s$ is divisible by the order of $g$ modulo $N$.
  Then $z^{a}=(g^i)^an_1=g^{ia+p^sb}(g^{-p^sb}n_1)=gn_2$ for suitable $n_1, n_2\in N$ and $\rho_i(z^a)=z^{ai}=z^{ai}z^{bp^s}=z.$
\end{proof}

\begin{lemma}\label{almeno5}
	Let $p\geq 5$ be a prime.
	If $N$ is a non-solvable normal subgroup of a finite group $G$ and $g\in \Omega_p(G),$ then $$\frac{|\Omega_p(gN)|}{|N|}\leq \frac{p}{2(p-1)}.$$
\end{lemma}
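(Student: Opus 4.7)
The plan is to work inside the subgroup $H := \langle g\rangle N$. Since $g$ is a $p$-element, $H/N$ is a cyclic $p$-group, generated by $gN$; write $|H/N| = p^m$ so that $|H| = p^m|N|$. My strategy is to bound $|\Omega_p(gN)|$ by comparing it with the \emph{total} number of $p$-elements of $H$, then control this total via Sylow's theorem together with the non-solvability of $N$.

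The first step is to apply \cref{samenumber} inside $H$. For every $i$ coprime to $p$, the map $x\mapsto x^i$ furnishes a bijection between $\Omega_p(gN)$ and $\Omega_p(g^iN)$, so all of these cosets contain the same number of $p$-elements. As $i$ ranges over the residues coprime to $p$ modulo $p^m$, the cosets $g^iN$ are precisely the $\phi(p^m)=p^{m-1}(p-1)$ generator cosets of the cyclic $p$-group $H/N$, and in particular are pairwise disjoint subsets of $H$. Consequently
\[
p^{m-1}(p-1)\cdot|\Omega_p(gN)| \;\leq\; |\Omega_p(H)|.
\]
Next, let $P$ be a Sylow $p$-subgroup of $H$; since every $p$-element of $H$ lies in a conjugate of $P$, the bound \eqref{upper-bound} applied to $H$ yields $|\Omega_p(H)| \leq n_p(H)\cdot|P| = |H|\cdot|P|/|N_H(P)|$. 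Combining the two inequalities and substituting $|H| = p^m|N|$ gives
\[
\frac{|\Omega_p(gN)|}{|N|} \;\leq\; \frac{p}{(p-1)\,[N_H(P):P]}.
\]

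The final step — and the main obstacle — is to show $[N_H(P):P]\geq 2$. Since $N\leq H$ is non-solvable, $H$ itself is non-solvable. Because $p\geq 5$, the exceptional composition-factor condition in \cref{norsyl} is vacuous, so that theorem applies and forces $P$ not to be self-normalizing in $H$; thus $[N_H(P):P]\geq 2$, giving the desired bound $|\Omega_p(gN)|/|N|\leq p/(2(p-1))$. The whole argument hinges on being able to invoke \cref{norsyl} to rule out $P = N_H(P)$, which is exactly why the hypothesis $p\geq 5$ (eliminating the ${\rm{PSL}}(2,3^f)$ exception) must be imposed.
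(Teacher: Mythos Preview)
Your proof is correct and follows essentially the same route as the paper's: work in $H=\langle g,N\rangle$, use \cref{samenumber} to see that the $\phi(p^m)$ generator cosets of $H/N$ all contain the same number of $p$-elements, bound $|\Omega_p(H)|$ via \eqref{upper-bound}, and invoke \cref{norsyl} (which is where $p\ge 5$ enters) to force $[N_H(P):P]\ge 2$. The only cosmetic difference is that the paper applies \cref{norsyl} first to get $|\Omega_p(H)|\le |H|/2$ directly, whereas you isolate the intermediate inequality $|\Omega_p(gN)|/|N|\le p/\bigl((p-1)[N_H(P):P]\bigr)$ before invoking it; the content is identical.
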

\begin{proof} 
    Let $N$ be a non-solvable subgroup of a finite group $G$ and assume that $gN$ is a $p$-element of $G/N.$ Let $p^n$ be the order of $gN$, $H=\langle g, N\rangle$ and  $P$ a Sylow $p$-subgroup of $H$.  By Theorem \ref{norsyl}, we have $$|\Omega_p(H)|\leq \frac{|H||P|}{|N_H(P)|}\leq \frac{|H|}{2}.$$
	By Lemma \ref{samenumber},  $|\Omega_p(g^jN)|=|\Omega_p(gN)|$ if $(j,p)=1$ and this implies
	$$(p^n-p^{n-1})|\Omega_p(gN)|=\left|\cup_{1\leq j<p^n,(j,p)=1}\Omega_p(g^jN)\right|\leq |\Omega_p(H)|\leq \frac{|H|}{2}=\frac{p^n|N|}{2},
	$$
	therefore,
	\[|\Omega_p(gN)|\leq \frac{p|N|}{2(p-1)}.\qedhere\]
\end{proof}

We are going to prove that the statement of the previous lemma is true even if $p=3.$ But in that case the argument used in the proof of Lemma \ref{almeno5} does not work, since there exist unsolvable groups whose Sylow 3-subgroups are self-normalizing. The following precise classification of the finite almost simple groups with this property is required.

\begin{lemma}\cite[Lemma 5]{vdovin}\label{vdo3} Assume that $G$ is an almost simple group. If the Sylow 3-subgroups of $G$ are self-normalizing, then $S=\soc(G)={\rm{PSL}}(2,3^{3^n})$
	and $G=S\rtimes \langle\phi\rangle$, with $\phi$ a field automorphism of $S$ of order $3^n$.
	\end{lemma}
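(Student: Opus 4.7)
The plan is to apply the classification of finite simple groups to $S = \soc(G)$. Let $U$ be a Sylow $3$-subgroup of $S$ and extend it to a Sylow $3$-subgroup $P$ of $G$. By a Frattini argument $G = S \cdot N_G(U)$, and $G/S \cong N_G(U)/N_S(U)$ embeds in $\mathrm{Out}(S)$. The hypothesis $N_G(P)=P$ translates into the statement that no non-trivial element of the $3'$-group $N_S(U)/U$ is centralized by a Sylow $3$-subgroup of $G/S$, and that the $3'$-part of $G/S$ cannot normalize $P$ without fixing something above $U$. Thus the problem reduces, for each almost simple $S$ arising from CFSG, to understanding the cokernel $N_S(U)/U$ together with the action of $\mathrm{Out}(S)$ on it.

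First I would dispatch the alternating groups, the sporadic groups, and the groups of Lie type in characteristic different from $3$. In each of these cases the Weyl-type contribution to $N_S(U)/U$ contains an involution --- arising for $A_n$ from a double transposition permuting two $3$-cycle factors, and for Lie type groups in characteristic $r \neq 3$ from the Weyl group of a maximal torus normalizing $U$ --- that is centralized by every outer automorphism of $S$, forcing $N_G(P) > P$. The sporadic cases are handled by direct inspection of the Atlas.

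The core case is $S$ of Lie type in characteristic $3$. Here $U$ is the unipotent radical of a Borel subgroup $B$, we have $N_S(U)=B$, and $T = B/U$ is a quotient of a split maximal torus. Outer automorphisms normalizing $U$ act on $T$ through diagonal, field, and graph(-field) automorphisms. If the Lie rank of $S$ is at least $2$, then $T$ is too large (in the sense of its cocharacter decomposition) for any combination of these outer actions to act without fixed points of order coprime to $3$, so the rank must be $1$ and $S = {\rm PSL}(2, 3^f)$. In this rank-one case $T$ is cyclic of order $(3^f-1)/2$, a field automorphism of order $d\mid f$ acts as Frobenius $x \mapsto x^{3^{f/d}}$, and fixed-point-freeness on $T$ forces $d = f$. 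The diagonal automorphism (of order $2$, coming from ${\rm PGL}(2,3^f)$) acts on $T$ by inversion; combined with Frobenius it always has a non-trivial fixed point, so $G/S$ cannot contain the diagonal and must equal $\langle \phi \rangle$ for a field automorphism $\phi$ of order $f$. A $3'$-element of $G/S$ would enlarge $N_G(P)$, so $G/S$ itself must be a $3$-group, whence $f = 3^n$ and $G = S \rtimes \langle \phi \rangle$ splits because the field automorphism group is a complement to the inner-diagonal part of $\aut(S)$.

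The main obstacle is the uniform rank-$\geq 2$ analysis in characteristic $3$: one must verify, across all classical and exceptional types, that no combination of diagonal, field, and graph(-field) automorphisms can act without non-trivial fixed points on the full split torus $T$. This is where the heavy lifting of the classification is needed and constitutes the technical heart of the statement.
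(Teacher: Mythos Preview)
The paper does not prove this lemma; it is quoted as \cite[Lemma 5]{vdovin} and used as a black box. So there is nothing in the paper to compare your argument against.

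Your sketch is a reasonable outline of the CFSG case analysis that underlies such a result, and you have correctly located the heart of the matter in the characteristic-$3$ Lie type case and the action of $P/U$ on $N_S(U)/U$. Two remarks on details. First, your claim that the diagonal automorphism of ${\rm PSL}(2,3^f)$ acts on the split torus $T$ by inversion is wrong: a diagonal representative $\mathrm{diag}(a,1)\in{\rm GL}(2,3^f)$ commutes with every diagonal matrix, so the diagonal outer automorphism \emph{centralizes} $T$. The conclusion you want (that $G/S$ cannot contain the diagonal) still holds, but for the simpler reason that this order-$2$ outer class normalizes $U$ and hence enlarges $N_G(P)$ directly. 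Second, for alternating and sporadic $S$ (and more generally whenever $\mathrm{Out}(S)$ has trivial Sylow $3$-subgroup) no analysis of the action on $N_S(U)/U$ is needed: one has $P=U$, and $N_S(U)>U$ already gives $N_G(P)>P$. Your invocation of ``an involution centralized by every outer automorphism'' is therefore more machinery than required in those cases.

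The genuinely hard step, as you note, is ruling out rank $\geq 2$ in characteristic $3$; that is exactly the content carried by the cited reference (and closely related to the Guralnick--Malle--Navarro theorem already quoted in the paper as Theorem~\ref{norsyl}).
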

	
To deal with the almost simple groups described in the previous Lemma we need to estimate the number of 3-elements in the coset $\phi\text{PSL}(2,3^{3^n}).$

\begin{lemma}\label{l23} Let $S=\soc(G)={\rm{PSL}}(2,3^{3^n})$
	and $G=S\rtimes \langle\phi\rangle$, with $\phi$ the field automorphism of $S$ of order $3^n$. Then
$$\sup_{n\in \mathbb N}\frac{|\Omega_3(\phi S)|}{|S|}=\frac{3}{4}.$$
\end{lemma}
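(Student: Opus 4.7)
The plan is to establish the equality $|\Omega_3(\phi S)|/|S| = 3/4$ for every $n \geq 1$, from which the supremum follows immediately. The key tool is a Shintani-type descent that transports the problem from the coset $\phi S$ inside the large group $G$ to the much smaller finite group $T := S^\phi = {\rm{PSL}}(2,3) \cong A_4$, whose 3-elements can be counted by hand.

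First, a direct induction shows that for $s \in S$ the element $x = s\phi$ satisfies
\[
x^{3^n} \;=\; s\,\phi(s)\,\phi^2(s)\cdots\phi^{3^n-1}(s) \;\in\; S;
\]
call this element $N(s)$. Since $\phi$ has order $3^n$, the element $x$ is a 3-element of $G$ if and only if $N(s)$ is a 3-element of $S$, i.e., a unipotent element.

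Next, I would apply the Lang--Steinberg theorem to the connected reductive algebraic group $\mathbf{G} := {\rm{PSL}}_2(\ol{\mathbb{F}}_3)$ equipped with Frobenius $\phi$. For each $s \in S$ write $s = a^{-1}\phi(a)$ with $a \in \mathbf{G}$; a short calculation shows that $b(s) := a\,\phi^{3^n}(a)^{-1}$ lies in $T$ and that $N(s) = a^{-1}b(s)^{-1}a$. Hence $N(s)$ is $\mathbf{G}$-conjugate to $b(s)^{-1}$, so $N(s)$ is a 3-element of $S$ if and only if $b(s)$ is a 3-element of $T$. The Shintani bijection between $S$-conjugacy classes in $\phi S$ and $T$-conjugacy classes in $T$ then sends the class of $s\phi$ to the $T$-class of $b(s)$; the twisted $S$-class corresponding to $t \in T$ has size $|S|/|C_T(t)|$, matching the size of the $T$-class of $t$ up to the scaling factor $|S|/|T|$. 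In particular, corresponding classes occupy the same proportion of their ambient sets.

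Summing these proportions over the 3-element classes of $T = A_4$ --- the identity class and the two classes of 3-cycles, accounting for $1+4+4=9$ of the $12$ elements of $T$ --- yields $|\Omega_3(\phi S)|/|S| = 9/12 = 3/4$ for every $n$, so the supremum equals $3/4$. The main obstacle is the clean invocation of Shintani descent for ${\rm{PSL}}_2$, which requires Lang--Steinberg on the algebraic group $\mathbf{G}$; a more elementary alternative would analyse the conjugacy classes of $S \rtimes \langle \phi\rangle$ directly from the structure of the almost simple extensions, but this reduces to the same counting.
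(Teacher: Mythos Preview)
Your argument is correct but takes a genuinely different route from the paper. The paper's proof is elementary: it exhibits a single non-$3$-element $y=\phi z$ in $\phi S$ (with $z$ an involution of $C_S(\phi)\cong A_4$), computes $|y^S|=|S|/4$ from $C_S(y)=C_S(\phi)\cap C_S(z)\cong C_2\times C_2$, deduces the upper bound $|\Omega_3(\phi S)|\le 3|S|/4$, and then verifies equality for $n=1$ from the ATLAS. You instead compute the ratio exactly for every $n$ by Shintani descent, reducing to the count of $3$-elements in $T=A_4$.

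Both approaches are valid; yours is more uniform (no special check at $n=1$) and in fact the paper invokes precisely this Shintani machinery later, in Proposition~\ref{anchepsl}, for the companion statement about ${\rm PSL}(2,3^f)$ with $f$ a $2$-power. The paper's argument here is shorter and avoids the algebraic-group setup.

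One small caveat: your claim that ``the twisted $S$-class corresponding to $t\in T$ has size $|S|/|C_T(t)|$'' presupposes a refined Shintani statement at the level of $S$-classes. The standard formulation (as in Harper, cited in the paper) gives a bijection between $\mathrm{PGL}(2,3^{3^n})$-classes in the coset $\phi S$ and $\mathrm{PGL}(2,3)$-classes in $A_4$, with matching centralizer orders in the $\mathrm{PGL}$ groups. Either way, the key consequence you use---that corresponding classes occupy the same proportion of $\phi S$ and of $A_4$---holds, so your count $9/12=3/4$ goes through unchanged.
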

\begin{proof} It can be easily checked that $C:=C_S(\phi)=\text{PSL}(2,3)\cong A_4$. In particular, there exists $z\in C$ of order 2. Let
$y=\phi z.$ Then $y^S=\{y^s\mid s\in S\}\subseteq \phi S$ 
and $|y^S|=|S|/|C_S(y)|=|S|/4$ (for $C_S(y)=C_S(\phi)\cap C_S(z) \cong C_2 \times C_2$). Since $|y|=2\cdot 3^n$, we have that $y^{S}\cap \Omega_3(G)=\emptyset,$ so $|\Omega_3(\phi S)\leq |S|-|y^S|\leq \frac{3|S|}4.$ Finally, it is straightforward to verify using \cite{atlas} that $|\Omega_3(\phi S)|/|S|=3/4$ in the particular case when $n=1.$
\end{proof}

\begin{cor}\label{corx3}
	If $S$ is a finite non-abelian simple group and $g\in \aut(S),$ then $$\frac{|\Omega_3(g S)|}{|S|}\leq\frac{3}{4}.$$
\end{cor}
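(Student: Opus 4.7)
The plan is to emulate the proof of Lemma \ref{almeno5} in the setting $p = 3$, applied to the almost simple enveloping group $H = \la S, g\ra$ with $S$ playing the role of the non-solvable normal subgroup; the argument there breaks down only when a Sylow $3$-subgroup $P$ of $H$ is self-normalizing, and this exceptional case is precisely the one pinned down by Lemma \ref{vdo3} and numerically controlled by Lemma \ref{l23}. I would first dispose of two trivial sub-cases: if $gS$ has order not a power of $3$ in $H/S$, no element of $gS$ is a $3$-element and so $\Omega_3(gS) = \emptyset$; and if $g \in S$, then \eqref{upper-bound} together with the fact that a Sylow $3$-subgroup of a non-abelian simple group is never self-normalizing (by Theorem \ref{norsyl} in the generic case and by direct inspection of the Borel subgroup in ${\rm{PSL}}(2,3^{3^a})$) gives $|\Omega_3(S)|/|S| \leq 1/2 \leq 3/4$.

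For the main case, assume that $gS$ has order $3^n$ with $n \geq 1$ in $H/S$. Lemma \ref{samenumber} yields $|\Omega_3(g^jS)|=|\Omega_3(gS)|$ for every $j$ coprime to $3$; since the cosets $g^jS$ with $1 \leq j < 3^n$ and $(j,3)=1$ are pairwise disjoint inside $H$, combining with \eqref{upper-bound} gives
\[ (3^n - 3^{n-1})\,|\Omega_3(gS)| \leq |\Omega_3(H)| \leq \frac{|H|\,|P|}{|N_H(P)|}. \]
If $N_H(P)\neq P$, the right-hand side is at most $|H|/2 = 3^n|S|/2$ and rearrangement yields the desired bound $|\Omega_3(gS)|\leq (3/4)|S|$. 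If instead $N_H(P)=P$, Lemma \ref{vdo3} forces $S = {\rm{PSL}}(2,3^{3^n})$ with $H = S\rtimes \la\phi\ra$ for a field automorphism $\phi$ of order $3^n$; since $gS$ generates $H/S$ we have $gS = \phi^jS$ for some $(j,3)=1$, and a final application of Lemma \ref{samenumber} followed by Lemma \ref{l23} gives $|\Omega_3(gS)|=|\Omega_3(\phi S)|\leq (3/4)|S|$.

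The main obstacle is the case $N_H(P) = P$: here the Sylow counting inequality from \eqref{upper-bound} is useless, and one must combine the structural classification of Lemma \ref{vdo3} with the fact that the explicit computation of Lemma \ref{l23} lands exactly at $3/4$, matching the bound to be proved.
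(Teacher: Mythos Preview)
Your proof is correct and follows essentially the same approach as the paper: split according to whether a Sylow $3$-subgroup of $H=\langle S,g\rangle$ is self-normalizing, use the counting argument of Lemma~\ref{almeno5} in the non-self-normalizing case, and invoke Lemmas~\ref{vdo3}, \ref{samenumber} and \ref{l23} in the exceptional case. Your explicit treatment of the trivial sub-cases ($gS$ not a $3$-element, and $g\in S$) is a little more careful than the paper, which leaves these implicit; note also that your $n$ in ``$S={\rm PSL}(2,3^{3^n})$'' silently coincides with the $n$ in ``$|gS|=3^n$'' because $H/S\cong\langle\phi\rangle$, which is worth a one-line remark.
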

\begin{proof}Let $H=\langle g,S\rangle \leq \aut(S)$ and let
	$P$ be a Sylow 3-subgroup of $G.$ If $N_H(P)>P,$ then, arguing as in the proof of \cref{almeno5}, we deduce that 
	$|\Omega_3(gS)|\leq 3|S|/4.$ If $N_H(P)=P,$ then, by  \cref{vdo3}, $S={\rm{PSL}}(2,3^{3^n})$
	and $H=S\rtimes \langle\phi\rangle$, with $\phi$ a field automorphism of $S$ of order $3^n$. Thus $gS=\phi^iS$ for some positive integer $i$ coprime with 3, and, by Lemmas \ref{samenumber} and \ref{l23},
	$|\Omega_3(gS)|=|\Omega_3(\phi S)|\leq 3|S|/4.$
	\end{proof}

\begin{lemma}\label{tre}
 Assume that $N$ is a normal subgroup of $G$ and let $g\in \Omega_3(G).$ 
	If $N$ is a non-solvable normal subgroup of a finite group $G,$ then $$\frac{|\Omega_3(gN)|}{|N|}\leq \frac{3}{4}.$$
\end{lemma}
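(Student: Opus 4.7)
My plan is to induct on $|N|$, reducing to the case where $N$ is a minimal non-abelian normal subgroup of $G$, and then to follow the scheme of \cref{almeno5} and \cref{corx3}, splitting the analysis according to whether a Sylow $3$-subgroup of $\langle g,N\rangle$ is self-normalizing. If $N$ admits a proper non-trivial $G$-normal subgroup $K$, there are two subcases. When $N/K$ is non-solvable, the surjection $gN\to (gK)(N/K)$ sends $3$-elements to $3$-elements and has fibres of size at most $|K|$, so the inductive bound for $(G/K,N/K)$ gives $|\Omega_3(gN)|\leq|K|\cdot\tfrac{3}{4}|N/K|=\tfrac{3}{4}|N|$. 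When instead $N/K$ is solvable, $K$ is itself non-solvable; applying the induction to $(G,K)$ on each coset $yK\subseteq gN$ (either the coset contains a $3$-element representative, in which case the inductive bound gives $|\Omega_3(yK)|\leq\tfrac{3}{4}|K|$, or the coset contains none and contributes $0$) and summing over the $[N:K]$ cosets yields the same bound. So it suffices to treat the base case $N=T_1\times\cdots\times T_k$ with each $T_i\cong T$ non-abelian simple. Since the ratio depends only on the coset $gN\subseteq\langle g,N\rangle$, I replace $G$ by $\langle g,N\rangle$; then $G/N=\langle gN\rangle$ is a cyclic $3$-group, and minimality of $N$ forces $\langle g\rangle$ to act transitively on $\{T_1,\ldots,T_k\}$, so $k$ is a $3$-power.

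Let $P$ be a Sylow $3$-subgroup of $G$. If $N_G(P)>P$, then $|\Omega_3(G)|\leq[G:N_G(P)]\,|P|\leq|G|/2$, and applying \cref{samenumber} exactly as in the proof of \cref{almeno5} (for $p=3$ the general bound $p/(2(p-1))$ equals $3/4$) yields $|\Omega_3(gN)|/|N|\leq 3/4$. Suppose instead $N_G(P)=P$; I claim this forces $k=1$. If $k\geq 3$, choose a Sylow $3$-subgroup $Q$ of $T_1$ and put $Q_i:=g^{i-1}Qg^{-(i-1)}\leq T_i$, so $P\cap N=Q_1\times\cdots\times Q_k$ and $P=\langle g,P\cap N\rangle$. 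Since $N_T(Q)/Q$ is a non-trivial $3'$-group, I pick $x\in N_T(Q)\setminus Q$ of order coprime to $3$ and define the diagonal element
\[
y\;:=\;(x,\,gxg^{-1},\,g^2xg^{-2},\,\ldots,\,g^{k-1}xg^{-(k-1)})\in N.
\]
A direct computation gives $gyg^{-1}=y$, and $y_i=g^{i-1}xg^{-(i-1)}\in N_{T_i}(Q_i)$, so $y$ normalizes $P\cap N$ and hence normalizes $P=\langle g,P\cap N\rangle$; but $y$ has order coprime to $3$, yielding $y\in N_G(P)\setminus P$, a contradiction. Therefore $k=1$ and $N=T$ is simple.

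With $N=T$ simple, $C_G(T)$ meets $T$ in $Z(T)=1$ and embeds in the cyclic $3$-group $G/T$, so $C_G(T)$ is a $3$-group. The quotient $\overline{G}:=G/C_G(T)$ is almost simple with socle $\overline{T}\cong T$, and the natural projection restricts to a bijection $gT\to\overline{g}\,\overline{T}$ which preserves the property of being a $3$-element: one direction is clear, and conversely if $\overline{x}$ has $3$-power order then $x^{3^m}\in C_G(T)$ for some $m$, and $C_G(T)$ being a $3$-group forces $x$ itself to be a $3$-element. Applying \cref{corx3} to $\overline{G}$ yields $|\Omega_3(\overline{g}\,\overline{T})|\leq\tfrac{3}{4}|\overline{T}|$, whence $|\Omega_3(gN)|/|N|\leq 3/4$. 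The delicate step will be the diagonal-element construction in the case $N_G(P)=P$, $k\geq 3$: one must pick $x$ of $3'$-order and verify that $y$ simultaneously commutes with $g$ and normalizes $P\cap N$, which together produce the desired element of $N_G(P)\setminus P$.
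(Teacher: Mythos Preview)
Your reduction to a minimal normal subgroup and the treatment of the case $N_G(P)>P$ are fine (modulo the harmless expository slip of replacing $G$ by $\langle g,N\rangle$ \emph{after} declaring $N$ minimal; one should swap the order, or simply iterate the inductive step once more inside $\langle g,N\rangle$).

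The genuine gap is in the case $N_G(P)=P$, $k\geq 3$: the claimed identity $gyg^{-1}=y$ is false. With $y_i=g^{i-1}xg^{-(i-1)}$ and $g$ acting as the $k$-cycle $T_i\mapsto T_{i+1}$, the $T_1$-component of $gyg^{-1}$ is $gy_kg^{-1}=g^k x g^{-k}$, not $x$. So $gyg^{-1}=y$ would require $g^k$ to centralise $x$, which it need not. More generally, unwinding $y^{-1}gy\in P$ shows that $y$ normalises $P$ if and only if the image $\overline{x}\in N_{T_1}(Q_1)/Q_1$ is fixed by the automorphism induced by $g^k$, and such a non-trivial fixed point need not exist.

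A concrete counterexample to your claim ``$N_G(P)=P$ forces $k=1$'': take $T=\mathrm{PSL}(2,27)$, $k=3$, and $g=(\phi,1,1)(1\,2\,3)$ where $\phi$ is the Frobenius automorphism of $T$ of order $3$. Then $g^3=(\phi,\phi,\phi)$, while $N_T(Q)/Q\cong C_{13}$ and $\phi$ acts on it as $x\mapsto x^3$, which has only the trivial fixed point. Hence $N_N(P)=P\cap N$ and $N_G(P)=P$, yet $k=3$.

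The paper avoids this reduction altogether: after passing to $G\leq\aut(S)\wr S_t$ it conjugates $g$ to the shape $\sigma(h,1,\dots,1)$ with $\sigma=(1,\dots,t)$, computes $(gn)^t$ explicitly, and observes that $gn$ is a $3$-element precisely when $hs_1\cdots s_t$ is a $3$-element in the almost simple group $\langle h,S\rangle$; then \cref{corx3} bounds the number of admissible products $s_1\cdots s_t$ by $\tfrac{3}{4}|S|$, giving $|\Omega_3(gN)|\leq\tfrac{3}{4}|N|$ directly for arbitrary $t$.
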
	
\begin{proof}
	It is not restrictive to assume that $N$ is a minimal normal subgroup of $G$. So $N\cong S^t$ with $t\in \mathbb{N}$ and $S$ a finite non-abelian simple group. 
 Let $\rho: G \to \aut(N)$ be the conjugation action. If $gn$ is a 3-element, then
	$(gn)^\rho$ is a 3-element, so it suffices to prove that $N$ contains at most $3|N|/4$ elements $n$ such that $(gn)^\rho=g^\rho n^\rho$ is a 3-element. For this purpose, we may assume that
	$\ker \rho=1$ (or equivalently that $N$ is the unique minimal normal subgroup of $G$). Thus, we may identify $G$ with a subgroup of the wreath product $\aut S \wr S_t.$ \\
    \indent Let $g=\sigma(h_1,\dots,h_t)$, with $h_1,\dots,h_t \in \aut S$ and $\sigma \in S_t$, be a $3$-element. We have to prove that the number of elements $n=(s_1,\dots,s_t)\in S^t$ such that $gn=\sigma(h_1s_1,\dots,h_ts_t)$ is a $3$-element has size at most $3|S|^t/4.$ Since $\sigma$ is a 3-element, it is the product of disjoint cycles of length a power of 3. 
    It suffices to prove the statement in the particular case when $\sigma$ is a cycle of length $t$. Up to conjugacy in $\aut(N),$ we may then assume $g=\sigma(h,1,\dots,1)$ with $\sigma=(1,2,\dots,t)$ and $t=3^a$. Since $g$ is a 3-element,
	$g^t=(h,\dots,h)\in (\aut S)^t$ is a 3-element, hence $h$ is a 3-element in $\aut S.$ Moreover, if $n=(s_1,\dots,s_t),$ then
	$$(gn)^t=(s_2s_3\cdots s_ths_1,s_3s_4\cdots hs_1s_2,\dots,hs_1s_2\dots s_{t-1}s_t),
	$$
	so $gn$ is a 3-element if, and only if, $hs_1s_2\dots s_{t-1}s_t$ is a 3-element. Since this element belongs to the almost simple group $\langle h,S \rangle$, then by  \cref{corx3}, the number of choices for the product $s_1\cdots s_t$ such that $hs_1\cdots s_t$ is a 3-element is at most $3|S|/4$ and this implies that the number of choices for $n$ is at most $3|N|/4.$
\end{proof}

\begin{proof}[Proof of Theorem \ref{podd}] Since $\Omega_p(G)=\cup_{gN\in \Omega_p(G/N)}\Omega_p(gN),$ it follows from Lemmas \ref{almeno5} and \ref{tre} that
	$$\begin{aligned}P_p(G)&=\frac{|\Omega_p(G)|}{|G|}\leq \sum_{gN\in \Omega_p(G/N)}\frac{|\Omega_p(gN)|}{|G|}
\leq\sum_{gN\in \Omega_p(G/N)}\frac{p|N|}{2(p-1)|G|}\\
&\leq \frac{p}{2(p-1)}\frac{|\Omega_p(G/N)||N|}{|G|}\leq \frac{p P_p(G/N)}{2(p-1)}.\qedhere
\end{aligned}$$
\end{proof}


\section{Exceptional behavior for $p=2$}\label{pro-2}

In this section, we give examples to show why the results of \cref{prob-p-groups} do not hold for $p=2$ and we highlight that our counterexamples rely on the exceptional behaviour of the set of the 2-elements in the Mathieu group $M_{10}$.

\indent Consider $X=M_{10}$. It is an almost simple group with $S = \soc(X) = A_6$ and $X/S \cong C_2.$ It can be deduced from the information given in \cite{atlas}, that $X$ contains  496 2-elements, 136 of which are in $S$. 
In particular all the elements of $X\setminus S$ are 2-elements.

Given now a positive integer $t,$ consider the following subgroup of the direct power $X^t:$
$$X_t=\{(x_1,\dots,x_t)\in X^t \mid x_1\equiv \cdots \equiv x_t \mod S\}.$$
Notice that $\soc(X_t)=S^t$ and $X_t/\soc(X_t)\cong C_2.$
Let $\alpha=(a,\dots,a)\in X^t,$ with $X=\langle a, S\rangle.$ Then $X_t$ is the disjoint union of the two cosets $\alpha S^t$ and $S^t$. All the elements of $\alpha S^t$
have order dividing 8, while $S^t$ contains precisely $136^t$ 2-elements.  It follows
$$P_2(G_t)=\frac{(136)^t+(360)^t}{2(360)^t}=\frac{1}{2}+\frac{1}{2}\frac{(136)^t}{(360)^t}
.$$
It $t_2\geq t_1$, then $G_{t_1}$ is an epimorphic image of $G_{t_2}$ so we may consider the profinite group
$$G:=\varprojlim_{t \in \mathbb N} G_t.$$
On one hand, $G$ is not virtually prosolvable because every open subgroup contains an infinite product of copies of $A_6$ and, on the other, we have 
$$P_2(G)=\lim_{t\to \infty}P_2(G_t)=\lim_{t\to \infty}
\frac{1}{2}\left(1+\frac{(136)^t}{(360)^t}\right)=\frac{1}{2}.$$

We build now a more complicated example to show that we can find a non-virtually prosolvable profinite group $G$ with $P_2(G)$ arbitrarily close to 1. Let $\sigma=(1,\dots,n)\in S_n$ with $n=2^t$. Choose again $a\in X\setminus S$ and take the element $g=(a,1,\dots,1)\sigma$ in the wreath product $X\wr \langle \sigma \rangle$  and let $Y_t=N\langle g\rangle$ be the subgroup of
$X\wr \langle \sigma \rangle$ generated by the base subgroup $N=S^n$ and $g.$  Let $\pi: Y_t\to \langle \sigma \rangle$ be the restriction to $Y_t$ of the projection $X\wr \langle \sigma \rangle \to \langle \sigma \rangle.$ If $z\in Y_t\setminus N$, then $|\pi(z)|=2^r$ with $r\leq t$ and $z^{2^r}=(x_1,\dots,x_n)$ with
$x_i \in X\setminus S$ for $1\leq i\leq n.$ In particular, $(x_1,\dots,x_n)$ is a 2-element and therefore $z$ is a 2-element. 
Consequently, 
$$P_2(Y_t)\geq \frac {|Y_t|-|N|}{|Y_t|}=\frac{2^{t+1}-1}{2^{t+1}}.$$
Now, given $u\in \mathbb N,$ let $X_u=\prod_{t\geq u}Y_{2^t}$. Then
$$P_2(X_u)\geq \prod_{t\geq u}
\left (1-\frac{1}{2^{t+1}}\right).$$
Since the infinite product $\prod_{t\in \mathbb N}
\left (1-\frac{1}{2^t}\right)$ converges to a non-zero value (because the series $\sum_t \frac{1}{2^t}$ converges), for every $0< c < 1,$ there exists $u\in \mathbb N$ such that $P_2(X_u)\geq c.$ 

The previous examples rely on the fact that $A_6\cong \text{PSL}(2,9)$ has an automorphism $g$ of order 4 with the property that all the elements in the coset $gA_6$ are 2-elements. This is a particular instance of a more general situation.

\begin{prop}\label{anchepsl}
	Let $S={\rm{PSL}}(2,3^f)$, with $f\neq 1$.
	Let $g\in {\rm{PGL}}(2,3^f)\setminus {\rm{PSL}}(2,3^f)$ and $\phi$ be the Frobenius automorphism of $S$ of order $f$. Then $\alpha=g\phi \in \aut(S)$ and all the elements of the cosets $\alpha S$ have order dividing $4f.$ In particular, if $f$ is a 2-power, then $\alpha S$ contains only 2-elements.
\end{prop}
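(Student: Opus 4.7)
The plan is to compute the order of an arbitrary element of $\alpha S$ via a trace-and-determinant analysis of a $2\times 2$ matrix. Working inside $\aut(S)=\mathrm{P}\Gamma\mathrm{L}(2,q)=\mathrm{PGL}(2,q)\rtimes\langle\phi\rangle$ with $q=3^f$, I would rewrite any $h\in\alpha S$ as $h=b\phi$ with $b:=g\,s^\phi\in\mathrm{PGL}(2,q)\setminus\mathrm{PSL}(2,q)$; the membership outside $\mathrm{PSL}$ is forced because $g$ does not lie in $\mathrm{PSL}$ and Frobenius preserves $\mathrm{PSL}$. Using $\phi x\phi^{-1}=x^\phi$ together with $\phi^f=1$, a short induction yields $h^f=A$, where $A:=b\cdot b^\phi\cdot b^{\phi^2}\cdots b^{\phi^{f-1}}\in\mathrm{PGL}(2,q)$; so the order of $h$ will divide $4f$ as soon as $A^4=1$ in $\mathrm{PGL}(2,q)$.

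To this end, lift $b$ to $\tilde b\in\mathrm{GL}(2,q)$ and set $\tilde A:=\tilde b\cdot\tilde b^\phi\cdots\tilde b^{\phi^{f-1}}$. A telescoping manipulation using $\tilde b^{\phi^f}=\tilde b$ gives $\tilde A^\phi=\tilde b^{-1}\tilde A\,\tilde b$, so $\tilde A^\phi$ and $\tilde A$ are conjugate in $\mathrm{GL}(2,q)$; consequently $\mathrm{tr}\,\tilde A$ is $\phi$-invariant and therefore lies in the prime field $\mathbb F_3=\{0,\pm 1\}$. Simultaneously $\det\tilde A=N_{\mathbb F_q/\mathbb F_3}(\det\tilde b)$, and since $\det\tilde b$ is a non-square in $\mathbb F_q^\ast$ this norm equals the non-square $-1$ of $\mathbb F_3^\ast$. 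Writing $\tau:=\mathrm{tr}\,\tilde A$, Cayley--Hamilton reads $\tilde A^2=\tau\tilde A+I$, and squaring produces $\tilde A^4=(\tau^3+2\tau)\tilde A+(\tau^2+1)I$. In characteristic $3$ we have $\tau^3+2\tau=\tau(\tau-1)(\tau+1)$, which vanishes on $\mathbb F_3$, so $\tilde A^4=(\tau^2+1)I\in\{I,-I\}$ in every admissible case. This delivers $A^4=1$ in $\mathrm{PGL}(2,q)$ and hence the desired bound on the order of $h$.

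The last assertion of the proposition is then immediate: if $f=2^a$ then $4f=2^{a+2}$ is a $2$-power, so every element of $\alpha S$ is a $2$-element. The step I expect to be the real crux of the argument is the confinement $\mathrm{tr}\,\tilde A\in\mathbb F_3$; once that restriction and the computation $\det\tilde A=-1$ are in hand, the identity $\tau(\tau-1)(\tau+1)=0$ in characteristic $3$ collapses the proof of $A^4=1$ to a one-line Cayley--Hamilton check.
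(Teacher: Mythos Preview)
Your argument is correct. Both the paper and you reduce the claim to showing that $(b\phi)^f=A:=b\,b^{\phi}\cdots b^{\phi^{f-1}}$ has order dividing $4$ in $\mathrm{PGL}(2,q)$, but you establish this by different means. The paper invokes Shintani descent: the map $(g\tilde\phi)\mapsto a^{-1}(g\tilde\phi)^f a$ induces a bijection from $X_{\phi^f}$-classes in the coset $X_{\phi^f}\tilde\phi$ to $X_\phi$-classes in $X_\phi=\mathrm{PGL}(2,3)$, restricting to a bijection between classes outside $\mathrm{PSL}$; since every element of $\mathrm{PGL}(2,3)\setminus\mathrm{PSL}(2,3)\cong S_4\setminus A_4$ satisfies $x^4=1$, one gets $(g\tilde\phi)^{4f}=1$. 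Your route is an explicit $2\times 2$ computation: from $\tilde A^{\phi}=\tilde b^{-1}\tilde A\,\tilde b$ you read off $\mathrm{tr}\,\tilde A\in\mathbb F_3$, and the norm argument gives $\det\tilde A=-1$; Cayley--Hamilton in characteristic $3$ then forces $\tilde A^4$ to be scalar. Your proof is more elementary and self-contained, avoiding any citation of the Shintani machinery, while the paper's approach is more conceptual and would generalise more readily to higher rank (where a trace-and-determinant check no longer pins down the conjugacy class). In effect, your trace and determinant computation is a bare-hands verification, special to $\mathrm{GL}_2$, of exactly what Shintani descent guarantees: that $A$ behaves as though it lived in $\mathrm{PGL}(2,3)$.
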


\begin{proof} Let $\mathbb{F}$ be the algebraic closure  of the  field with 3 elements, consider $X=\text{PSL}(2,\mathbb{F})$ and let $\phi$ be the Frobenius endomorphism of $X$ induced by the automorphism of $\mathbb{F}$ which maps $a$ to $a^3.$ For any $f \in \mathbb{N}$, we have that $X_{\phi^f} = C_X(\phi^f)=\text{PGL}(2,3^f)$. If $f >1$, then $X_{\phi^f}$ is $\phi$-stable and $\phi$ restricts to an automorphism $\tilde{\phi}$ of $X_{\phi^f}$ of order $f$. Let $G_f = \text{PGL}(2,3^f) \rtimes \langle\tilde{\phi}\rangle$. Choose $a\in X$ such that $g=aa^{-\phi^{-1}}$.
	By \cite[Theorem 3.1.4]{har}, we have the Shintani map
\[\begin{aligned}\sigma: \{(g\tilde \phi)^{X_{\phi^f}}\mid g \in X_{\phi^f}\}&\to \{(x)^{X_{\phi}}\mid x \in X_{\phi}\} \\
(g\tilde \phi)^{X_{\phi^f}}&\mapsto (a^{-1}(g\tilde \phi)^ea)^{X_{\phi}}.\end{aligned}\] This is a well-defined bijection between the $X_{\phi^f}$-conjugacy classes  in $G_f$ that are contained in the coset $X_{\phi^f}\tilde{\phi}$ and the  $X_{\phi}$-conjugacy classes in $X_\phi$. Since $\langle \text{PSL}(2,3^f), \tilde{\phi} \rangle \unlhd G_f$, then $\sigma$ restricts to a bijection
\[\{(g\tilde \phi)^{X_{\phi^f}}\mid g \in \text{PSL}(2,3^f)\} \to \{(x)^{X_{\phi}}\mid x \in \text{PSL}(2,3)\}\]
by \cite[Corollary 3.2.3]{har}. Consequently, it also restricts to a bijection
\[\{(g\tilde \phi)^{X_{\phi^f}}\mid g \in X_{\phi^f}\setminus \text{PSL}(2,3^f)\} \to \{(x)^{X_{\phi}}\mid x \in X_\phi \setminus \text{PSL}(2,3)\}.\]
From the isomorphisms $X_\phi = \text{PGL}(2,3) \cong S_4$ and $\text{PSL}(2,3) \cong A_4$, it follows that $x^4 = 1$ for every element  $x \in \text{PGL}(2,3) \setminus \text{PSL}(2,3)$. In particular, if $g \in \text{PGL}(2,3^f) \setminus \text{PSL}(2,3^f)$, then $(g\tilde{\phi})^{4f}=1$. If $f$ is a power of 2, then $g\tilde{\phi}$ is a 2-element.
\end{proof}

 We now prove that if $S$ is a finite non-abelian simple group and $g$ is an automorphism of $S$ with the property that all the elements of $gS$ are $p$-elements for some prime $p,$ then $p=2$ and the pair $(S,g)$ is one of those described in the previous proposition.

\begin{lemma}
	Let $G=\langle g, S\rangle$ be an almost simple group with $S=\soc(G)$ and $|G/S|=p^n$ for some prime $p.$ If $gS \subseteq \Omega_p(G),$ then $p=2$ and $N_G(P)=P$ for any Sylow $p$-subgroup $P$ of $G.$
\end{lemma}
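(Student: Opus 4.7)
The plan is to combine the hypothesis with \cref{samenumber} to force $P_p(G)$ to exceed the Sylow bound (\ref{upper-bound}) unless $N_G(P)=P$, and then to invoke \cref{norsyl} and \cref{vdo3} to rule out every odd prime. The argument is almost entirely an assembly of tools already developed in this section.

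First I would observe that $G=\langle g,S\rangle$ together with $G/S$ being a $p$-group of order $p^n$ makes $G/S$ cyclic, generated by $gS$. The hypothesis $gS\subseteq \Omega_p(G)$ then propagates to every generating coset: \cref{samenumber} with $N=S$ provides a bijection $\rho_i\colon \Omega_p(gS)\to \Omega_p(g^iS)$ for every $i$ coprime to $p$, so $g^iS\subseteq \Omega_p(G)$ for all such $i$. Summing over the $p^{n-1}(p-1)$ generating cosets gives
\[
|\Omega_p(G)|\ge p^{n-1}(p-1)|S|=\frac{p-1}{p}\,|G|,
\]
and combining with (\ref{upper-bound}) yields $|N_G(P):P|\le p/(p-1)$. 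Since $P$ is a normal Sylow $p$-subgroup of $N_G(P)$, the index $|N_G(P):P|$ is a positive integer coprime to $p$. For odd $p$ the bound $p/(p-1)\le 3/2$ forces this integer to be $1$; for $p=2$ it is odd and at most $2$, again equal to $1$. In every case $N_G(P)=P$.

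It then remains to exclude the odd primes. If $p\ge 5$, \cref{norsyl} applies without exception and forces $G$ to be solvable, contradicting the hypothesis that $G$ is almost simple. If $p=3$, \cref{vdo3} restricts the pair to $S\cong \text{PSL}(2,3^{3^n})$ and $G=S\rtimes \langle\phi\rangle$ with $\phi$ a field automorphism of order $3^n$; writing $gS=\phi^iS$ with $i$ coprime to $3$, \cref{samenumber} together with \cref{l23} gives $|\Omega_3(gS)|=|\Omega_3(\phi S)|\le 3|S|/4<|S|$, contradicting $gS\subseteq \Omega_3(G)$. Hence $p=2$, and the self-normalizing property has already been established above.

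The main thing to watch is the propagation step: one must verify carefully that every generating coset contributes its full $|S|$ elements to $|\Omega_p(G)|$, since this is what pushes the bound past $(p-1)/p$ and forces $N_G(P)=P$ even in the tight case $p=2$. Once this bookkeeping is in place, the conclusion is essentially immediate from the previously assembled machinery, so I do not anticipate any substantial obstacle beyond this verification.
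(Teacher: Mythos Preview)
Your argument is correct and is essentially the paper's own proof: the propagation via \cref{samenumber} together with the Sylow bound (\ref{upper-bound}) is exactly the computation packaged in the paper as ``as in the proof of \cref{almeno5}'', yielding $|N_G(P):P|\le p/(p-1)$, and your elimination of odd primes via \cref{norsyl} and (for $p=3$) \cref{vdo3} plus \cref{l23} is precisely what the paper does by invoking \cref{norsyl} and \cref{corx3}. The only cosmetic difference is that you bound $P_p(G)$ from below while the paper bounds $|\Omega_p(gS)|$ from above, and you unpack \cref{corx3} into its constituent lemmas.
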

\begin{proof}
	As in the proof of Proposition \ref{almeno5},
	$$|\Omega_p(gS)|\leq \frac{p|S|}{(p-1)|N_G(P):P|}.$$
	In particular, if $gS \subseteq \Omega_p(G),$ then
	$|N_G(P):P|\leq p/(p-1).$
	If $p$ is odd, then $p/(p-1)<2$, so $|N_G(P):P|<2,$ and therefore
	$N_G(P)=P$, but if follows from Theorem \ref{norsyl} and \cref{corx3} that this case cannot occur. So $p=2$ and
	$|N_G(P):P|\leq 2.$ Since 2 does not divide $|N_G(P):P|,$ it follows that $N_G(P)=P.$
\end{proof}

\begin{prop}\label{onlypsl}
    Let $S$ be a finite non-abelian simple group and $g \in \aut(S)$. If $gS$ consists only of 2-elements, then {\normalfont{$S \cong \text{PSL}(2,3^f)$}} with $f$ a 2-power and $g=\phi^i x$ with	 $x\in {\rm{PGL}}(2,3^f)\setminus {\rm{PSL}}(2,3^f)$, where $\phi$  is the Frobenius automorphism of $S$ of order $f$ and $i$ is odd.
\end{prop}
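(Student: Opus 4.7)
The plan is to apply the preceding unnamed Lemma to reduce to a self-normalizing Sylow 2 situation, then combine this with a CFSG-based case analysis and with the Shintani-descent machinery used in the proof of Proposition \ref{anchepsl}. Write $G=\langle g,S\rangle$. Since every element of $gS$ is a 2-element, $gS$ is itself a 2-element of $G/S=\langle gS\rangle$, so $|G:S|$ is a 2-power; the preceding unnamed Lemma then yields $p=2$ and $N_G(P)=P$ for a Sylow 2-subgroup $P$ of $G$.

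Next I would invoke a $p=2$ analogue of Lemma \ref{vdo3} --- a case-by-case classification, over CFSG, of the finite almost simple groups with self-normalizing Sylow 2-subgroup --- to restrict the socle to $S\cong{\rm PSL}(2,3^f)$ with $f\geq 2$. Groups of Lie type in characteristic 2 are eliminated because a Borel subgroup of the natural almost simple overgroup properly normalizes its unipotent radical; the alternating, sporadic and remaining Lie type families are excluded by short inspections using standard Sylow 2-normalizer data. The bare condition $N_G(P)=P$ admits more solutions for $p=2$ than for odd $p$, so the stronger hypothesis that the whole coset $gS$ lies in $\Omega_2(G)$ --- and not merely $N_G(P)=P$ --- must be used to rule out the leftover families.

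Having reduced to $S={\rm PSL}(2,3^f)$ with $f\geq 2$, one has $\aut(S)={\rm PGL}(2,3^f)\rtimes\langle\phi\rangle$, so we write $g=\phi^ix$ with $x\in{\rm PGL}(2,3^f)$ and $0\leq i<f$. Re-running the Shintani-descent argument from Proposition \ref{anchepsl} with $\phi^i$ in place of $\phi$ puts the ${\rm PGL}(2,3^f)$-conjugacy classes in the coset ${\rm PGL}(2,3^f)\phi^i$ in bijection with conjugacy classes in ${\rm PGL}(2,3^e)$, where $e=\gcd(i,f)$. Requiring every element of $gS$ to be a 2-element forces the corresponding elements of ${\rm PGL}(2,3^e)\setminus{\rm PSL}(2,3^e)$ to be 2-elements, which holds iff $e=1$ and $x\notin{\rm PSL}(2,3^f)$, since ${\rm PGL}(2,3)\setminus{\rm PSL}(2,3)\cong S_4\setminus A_4$ consists entirely of 2-elements. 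The condition that $g$ itself has 2-power order, together with $\gcd(i,f)=1$, then forces $f$ to be a 2-power, completing the proof.

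The main obstacle is the classification step. The condition $N_G(P)=P$ alone does not force $S={\rm PSL}(2,3^f)$ when $p=2$, and the extra input that all of $gS$ consists of 2-elements must be carefully leveraged in a CFSG-based inspection to rule out the other families that satisfy the self-normalization condition. The subsequent Shintani step is essentially a direct mirror of the analysis already carried out in the proof of Proposition \ref{anchepsl}.
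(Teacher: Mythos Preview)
Your route diverges from the paper's in two substantial ways, and the first of these is a genuine gap.

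\textbf{The classification step.} The paper does \emph{not} pass through a classification of almost simple groups with self-normalizing Sylow $2$-subgroups. Instead it argues directly: sporadic groups are excluded via the ATLAS; $A_n$ ($n\neq 6$) is excluded because $(1,2,3)(4,5)\in S_n\setminus A_n$; for groups of Lie type in characteristic $r$ it invokes \cite[Lemma~6]{ln} to show that, unless $S=\mathrm{PSL}(2,q)$, the coset $gS$ meets an element of order divisible by $r$; the characteristic-$2$ case is then eliminated by exhibiting an odd-order element in $C_S(\langle\phi,\tau\rangle)$. This reduces to $S=\mathrm{PSL}(2,q)$ with $q$ odd. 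Your proposed substitute --- classifying almost simple groups with $N_G(P)=P$ for $p=2$ and then pruning with the coset hypothesis --- is not carried out, and you yourself flag it as ``the main obstacle''. The self-normalization condition is genuinely weaker here: for instance $S_5$ already has self-normalizing Sylow $2$-subgroups, yet $\mathrm{soc}(S_5)=A_5\cong\mathrm{PSL}(2,5)$ is not of the form $\mathrm{PSL}(2,3^f)$. So ``short inspections'' will not land you directly on $\mathrm{PSL}(2,3^f)$; at best they land you on $\mathrm{PSL}(2,q)$, and you still owe an argument forcing the residue characteristic to be $3$.

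\textbf{The endgame.} The paper does not reuse Shintani descent here. Having reduced to $S=\mathrm{PSL}(2,q)$ and $g=\phi\alpha$ with $\alpha$ diagonal non-inner (pure Frobenius and pure inner-diagonal cases being excluded by exhibiting odd-order elements in $gS$), it performs two explicit matrix computations: for $y=\mathrm{diag}(a,1)$ and for $y=\begin{pmatrix}0&-a\\1&0\end{pmatrix}$ with $a$ a generator of $\mathbb{F}_q^\times$, it evaluates $(\phi y)^t$ and reads off that $r-1$ and $r+1$ must both be $2$-powers, forcing $r=3$. Your Shintani argument is a legitimate alternative in spirit, but as written it is circular: you assert that the Shintani correspondents in $\mathrm{PGL}(2,3^e)\setminus\mathrm{PSL}(2,3^e)$ are all $2$-elements ``iff $e=1$'', which already presupposes the base field has order $3$. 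If instead you only know $S=\mathrm{PSL}(2,q)$ with $q=r^t$, Shintani descent lands you in $\mathrm{PGL}(2,r^e)$, and you must then argue that the relevant coset there consists of $2$-elements only when $r=3$ and $e=1$ --- which amounts to the same ``$r\pm 1$ are $2$-powers'' computation the paper does, just repackaged.

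In summary: the reduction to $\mathrm{PSL}(2,q)$ needs an actual argument (the paper supplies one via \cite{ln} and centralizer inspections, not via self-normalizing Sylow $2$-subgroups), and the pinning down of $q=3^f$ requires either the paper's explicit matrices or an honest Shintani computation over an unknown residue characteristic.
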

\begin{proof}
By assumption, $g$ is a 2-element. Since $S$ is not a 2-group, we may exclude that $g$ is an inner automorphism of $S$. Using the information in \cite{atlas}, it can be seen that $S$ cannot be a sporadic simple group. Moreover if $S=A_n$ and $n\neq 6$, then $\aut(A_n)=S_n$ and $gA_n= S_n \setminus A_n$ contains $(1,2,3)(4,5).$ Moreover it follows from \cite[Lemma 6]{ln} and its proof, that if $S$ is a simple group of Lie type defined over a field of characteristic $p$, then either $S=\text{PSL}(2,q)$ or $gS$ contains an element of order divisible by $p$. Thus if
$gS\subseteq \Omega_2(G),$ then either $S$ is defined over a field of characteristic 2 or $S=\text{PSL}(2,q).$

In the first case, $\text{Inndiag}(S)/S$ has odd order, so we may assume that $g$ is the product of a graph automorphism $\tau$ and a Frobenius automorphism $\phi$. However, it can be easily checked that $C_S(\langle\phi,\tau\rangle)$ contains at least one non-trivial element $s$ of odd order, which would imply that $gs$ is an element of the coset $gS$ which is not a 2-element.

So we may assume $S=\text{PSL}(2,q)$ with $q$ odd. We may exclude again that $g$ is a Frobenius automorphism because in this case $C_S(g)$ contains an element of odd order; moreover $g$ cannot be inner-diagonal, because in this case $gS$ contains an element of order $q-1$ and one of order $q+1$ (\cite[Kapitel II, Hauptsatz 8.27]{hup}), and either $q-1$ or $q+1$ is divisible by an odd prime (unless $q=3$, but in this case $\text{PSL}(2,3)$ is not simple). We view $\text{PSL}(2,q)$ as the factor group $\text{SL}(2,q)/Z,$
where $Z$ is the set of the scalar matrices in  $\text{SL}(2,q)$. We deduce from the previous considerations that $g=\phi\alpha$ where $\alpha$ is a diagonal non-inner automorphism of $S$ and $\phi$ is a Frobenius automorphism. 
By  \cref{samenumber} we may assume that there exists a prime-power $r$ such that $\phi$ is 
the Frobenius automorphism  induced by the automorphism
$x \mapsto x^r$ of the field of order $q=r^t$. Since $t$ divides $|g|,$ $t$ is a 2-power. In particular, denoting by $\tilde y$ the automorphism induced by the conjugation with $y\in \text{GL}(2,q),$ we have that
$$gS = \{\phi\tilde y \mid  y\in \text{GL}(2,q) \text { and }\det (y) \notin \mathbb{F}_q^2\}.$$ 
Let $a \in \mathbb{F}_q$ be an element of order $q-1$. First, consider the matrix
\[y =
    \begin{pmatrix}
        a & 0 \\
        0 & 1
    \end{pmatrix}.
\]
Then
\[(\phi y)^t = 
    \begin{pmatrix}
        a^{\left( \frac{q-1}{r-1}\right)} & 0 \\
        0 & 1
    \end{pmatrix}.\]
    Since $\phi \tilde y$ is a 2-element and $t$ is a 2-power,
    $(\phi \tilde y)^t$ is also a 2-element
    and thus $r-1$ must be a 2-power. Secondly, we consider the matrix
\[y=
    \begin{pmatrix}
        0 & -a \\ 1 & 0
    \end{pmatrix}.   
\]
Then
\[(\phi y)^t = 
    \begin{pmatrix}
        a^{\left( \frac{q-1}{r^2-1}\right)} & 0 \\
        0 & a^{r\left( \frac{q-1}{r^2-1}\right)} 
    \end{pmatrix}\]
and from the fact that $(\phi \tilde y)^t$ is a 2-element, it follows that $r+1$ is also a 2-power. We have so proved that $r-1$ and $r+1$ are both 2-powers, and this is possible only if $r=3$ and $q=3^t$ with $t$ a 2-power.
\end{proof}

Notice that the argument in the proof of Lemma \ref{tre} can be adapted to prove the following result:

\begin{lemma}\label{bgam}
Assume that $N$ is a minimal normal subgroup of a finite group $G$ and let $S$ be a composition factor of $N$. Assume that there exists a constant $\gamma_S<1$ with the property that $|\Omega_2(gS)|\leq \gamma_S|S|$ for every $g\in \aut(S)$.
Then $|\Omega_2(gN)|\leq \gamma_S|N|$ for every $g\in \Omega_2(G).$
\end{lemma}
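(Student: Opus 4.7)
The plan is to adapt the proof of \cref{tre} almost verbatim, substituting the hypothesized bound $|\Omega_2(gS)| \leq \gamma_S|S|$ (available by assumption for every $g \in \aut(S)$) in place of \cref{corx3}. Since the structural argument of \cref{tre} never actually used the specific value $3/4$, just that it is strictly below $1$, the generalization should be essentially mechanical.

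First I would carry out the standard reductions. Since $N$ is minimal normal, $N \cong S^t$ for some $t \in \mathbb{N}$. If $gn$ is a 2-element, then so is its image under the conjugation action $\rho: G \to \aut(N)$, namely $g^\rho n^\rho$, so it suffices to bound the number of $n \in N$ with $g^\rho n^\rho$ a 2-element. Replacing $G$ by $G/\ker\rho$, we may assume $\ker\rho = 1$ and hence $G \hookrightarrow \aut(S) \wr S_t$, with $N$ the unique minimal normal subgroup.

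Next I would write $g = \sigma(h_1,\dots,h_t)$ with $\sigma \in S_t$ and $h_i \in \aut(S)$. Being a 2-element forces $\sigma$ to be a 2-element of $S_t$, so $\sigma$ is a product of disjoint cycles of 2-power lengths with supports $I_1,\dots,I_k \subseteq \{1,\dots,t\}$. The computation of a power of $gn$ factors across these supports, so the condition that $gn$ is a 2-element decouples into $k$ independent conditions on the component tuples $(s_i)_{i \in I_j}$. It therefore suffices to treat a single cycle of length $t = 2^a$; up to conjugacy in $\aut(N)$ I may take $\sigma = (1,2,\dots,t)$ and $g = \sigma(h,1,\dots,1)$. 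The explicit formula for $(gn)^t$ already derived in the proof of \cref{tre} shows that $gn$ is a 2-element if and only if $h s_1 s_2 \cdots s_t$ is a 2-element of the almost simple group $\langle h, S\rangle$. Fixing $s_1,\dots,s_{t-1}$ arbitrarily and letting only $s_t$ vary, the hypothesis gives at most $\gamma_S|S|$ valid choices of $s_t$, hence at most $\gamma_S |S|^t$ admissible tuples in the single-cycle case.

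For the general $k$-cycle case, multiplying the single-cycle bound over the supports $I_1,\dots,I_k$ yields at most $\gamma_S^k |N|$ admissible $n$; since $\gamma_S < 1$ and $k \geq 1$, this is in particular bounded by $\gamma_S|N|$, as required. I do not anticipate a real obstacle here: the argument is structurally identical to that of \cref{tre}, and the only point worth noting is that in the multi-cycle situation the independent single-cycle bounds combine multiplicatively, which is dominated by the single factor $\gamma_S$ precisely because $\gamma_S < 1$.
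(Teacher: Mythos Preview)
Your proposal is correct and follows exactly the approach the paper intends: the paper itself gives no separate proof of this lemma, merely remarking that the argument of \cref{tre} can be adapted, and your write-up carries out precisely that adaptation (with $p=2$ and the assumed constant $\gamma_S$ playing the role of $3/4$ from \cref{corx3}). Your explicit treatment of the multi-cycle case via the multiplicative bound $\gamma_S^k \leq \gamma_S$ is a welcome clarification of a step the paper leaves implicit.
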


Given a positive real number $\gamma<1$, let $\Sigma_\gamma$ be the set of the non-abelian simple groups $S$ with the property that $|\Omega_2(gS)|\leq \gamma|S|$ for every $g\in \aut(S).$ As a consequence of \cref{bgam}, the following holds:

\begin{thm}
Let $G$ be a profinite group and suppose that there exist $\gamma<1$ and $t\in \mathbb N$ with the property that for  every open normal subgroup $N$ of $G,$ a composition series of $G/N$ contains at most $t$ non-abelian factors which are not in $\Sigma_\gamma.$ If $P_2(G)>0,$ then $G$ is virtually prosolvable.
\end{thm}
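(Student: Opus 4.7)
\emph{Proof proposal.} The plan is to follow the two-step template that is used (implicitly) to derive \cref{secondocor} from \cref{podd}: first bound, uniformly in the open normal subgroup $N$ of $G$, the number of non-abelian chief factors of $G/N$, and then conclude from such a uniform bound that some open normal subgroup of $G$ is prosolvable.

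For the first step, I would fix an open normal subgroup $N\unlhd G$ and a chief series $1=N_0/N\lhd N_1/N\lhd\cdots\lhd N_k/N=G/N$. At each index $i$ for which $N_i/N_{i-1}$ is a non-abelian chief factor whose simple composition factor $S$ belongs to $\Sigma_\gamma$, \cref{bgam} applies to the pair $(G/N_{i-1}, N_i/N_{i-1})$, and summing over the $2$-element cosets modulo $N_i/N_{i-1}$ (exactly as in the proof of \cref{podd}) yields $P_2(G/N_{i-1})\leq \gamma\, P_2(G/N_i)$. For all other indices, the trivial inequality $P_2(G/N_{i-1})\leq P_2(G/N_i)$ suffices. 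Writing $s(N)$ for the number of non-abelian chief factors of $G/N$ whose composition factor lies in $\Sigma_\gamma$, telescoping across the chief series gives $P_2(G)\leq P_2(G/N)\leq \gamma^{s(N)}$, and since $P_2(G)>0$ and $\gamma<1$ this forces $s(N)\leq s_0:=\log P_2(G)/\log\gamma$. A non-$\Sigma_\gamma$ non-abelian chief factor isomorphic to $T^{k}$ contributes $k\geq 1$ non-$\Sigma_\gamma$ non-abelian composition factors, so the hypothesis of the theorem bounds by $t$ the number of non-$\Sigma_\gamma$ non-abelian chief factors of $G/N$. Altogether, the total number $c(N)$ of non-abelian chief factors of $G/N$ is at most $m:=s_0+t$, independently of $N$.

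For the second step I would use monotonicity of $c$: when $N'\subseteq N$ are open normal subgroups of $G$, refining a chief series of $G/N'$ through $N/N'$ identifies the non-abelian chief factors of $G/N$ with those chief factors of $G/N'$ that lie above $N/N'$, whence $c(N')\geq c(N)$. Because $c$ is bounded, I fix an open normal $N_0\unlhd G$ attaining $m_0:=\max_N c(N)$; for every open normal $N\subseteq N_0$ the equality $c(N)=m_0$ then forces every $G$-chief factor of $G/N$ lying below $N_0/N$ to be abelian, so $N_0/N$ is solvable. Passing to the inverse limit, $N_0$ is a prosolvable open subgroup of $G$, which proves that $G$ is virtually prosolvable. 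The only real obstacle is the Jordan--H\"older bookkeeping used in this paragraph, which is entirely standard; no input beyond \cref{bgam} and the technique of \cref{podd} is needed.
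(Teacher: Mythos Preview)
Your proposal is correct and follows exactly the route the paper intends: the paper does not give an explicit proof of this theorem, merely stating it ``as a consequence of \cref{bgam}'', and your argument supplies precisely the details implicit there---namely, the coset-summation step from the proof of \cref{podd} applied via \cref{bgam} to bound uniformly the number of non-abelian chief factors of $G/N$, followed by the standard passage (used implicitly after \cref{non-ab-cf} to reach \cref{secondocor}) from such a uniform bound to the existence of an open prosolvable subgroup.
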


This motivates the following question:

\begin{question}\label{quest}
Does there exist a constant $\gamma < 1$ with the property that either $S\in \Sigma_\gamma$ or {\normalfont{$S \cong \text{PSL}(2,3^f)$}} with $f$ a 2-power?
\end{question}

If the answer to the previous question were affirmative, then Conjecture \ref{conj-prob-2-grps} would be true.
The following small result implies that in order to answer Question \ref{quest}, one has only to consider the simple groups of Lie type.


\begin{lemma}\label{anchealt} 
	There exists $\gamma<1$ such that $A_n \in \Sigma_\gamma$ for every $n\neq 6.$
\end{lemma}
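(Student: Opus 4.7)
The plan is as follows. For every $n\geq 5$ with $n\neq 6$ we have $\aut(A_n)=S_n$, so we must bound $|\Omega_2(gA_n)|/|A_n|$ uniformly in $n$ for $g\in S_n$. Only two cosets arise, namely $A_n$ itself and $S_n\setminus A_n$, so it suffices to exhibit a single $\gamma<1$ such that $|\Omega_2(S_n)|/|A_n|\le\gamma$ for every such $n$, that is $|\Omega_2(S_n)|/n!\le\gamma/2$.

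A permutation $\pi\in S_n$ is a $2$-element iff every cycle length of $\pi$ is a power of $2$. By the exponential formula for cycle-structure generating functions, the EGF for such permutations is
\[
f(x)\;=\;\exp\!\Bigl(\sum_{k\geq 0}\frac{x^{2^k}}{2^k}\Bigr),
\]
so that $|\Omega_2(S_n)|/n!=[x^n]f(x)$. Since $\sum_{k\geq 0}1/2^{k}=2$, the value $f(1)=e^{2}$ is finite; but $f$ has non-negative Taylor coefficients whose sum is the convergent total $e^{2}$, and therefore the general term must tend to $0$. In particular, there exists $n_0$ such that $|\Omega_2(S_n)|/n!\le 1/4$ for every $n\ge n_0$, which gives $|\Omega_2(gA_n)|/|A_n|\le |\Omega_2(S_n)|/|A_n|\le 1/2$ for all such $n$ and every $g\in S_n$.

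It remains to handle the finitely many $n\in\{5,7,8,\dots,n_0-1\}$. For each of these values, both cosets of $A_n$ in $S_n$ contain a non-$2$-element: $A_n$ contains a $3$-cycle, and $S_n\setminus A_n$ contains the product of a transposition with a disjoint $3$-cycle (an element of order $6$). Hence $|\Omega_2(gA_n)|/|A_n|$ is strictly less than $1$ in both cases. Taking $\gamma$ to be the maximum of $1/2$ and these finitely many explicit ratios produces the required uniform constant.

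The only genuinely non-combinatorial ingredient is the asymptotic step, but this reduces to the elementary fact that a non-negative power series converging at $x=1$ has coefficients tending to $0$; the remainder is bookkeeping with cosets and a check over the finitely many small $n$.
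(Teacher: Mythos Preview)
Your proof is correct. Both your argument and the paper's share the same overall architecture: show that the proportion of $2$-elements in $S_n$ tends to $0$ as $n\to\infty$, then absorb the finitely many small $n$ (where each coset visibly contains a non-$2$-element) into the constant.

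The difference lies in the asymptotic step. The paper quotes the Erd\H{o}s--Tur\'an estimate from \cite{et}, which bounds the probability that a random element of $S_n$ has order coprime to $3$ by $3\exp\bigl(-\tfrac{1}{3}\log(n/3)\bigr)$; since every $2$-element has order coprime to $3$, this immediately forces $|\Omega_2(S_n)|/n!\to 0$, with an explicit rate. Your route is more elementary and entirely self-contained: the exponential formula gives $|\Omega_2(S_n)|/n!=[x^n]\exp\bigl(\sum_{k\ge 0}x^{2^k}/2^k\bigr)$, the value at $x=1$ is $e^{2}<\infty$, and a convergent series with non-negative terms has terms tending to $0$. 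What you lose is the explicit decay rate; what you gain is independence from an external reference. Your handling of the finitely many small $n$ via the explicit non-$2$-elements (a $3$-cycle in $A_n$, and $(1\,2)(3\,4\,5)$ in $S_n\setminus A_n$) is exactly the step the paper leaves implicit.
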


\begin{proof}
It follows from \cite[(4.1)]{et} that the probability that an element of $S_n$ has order coprime with 3 is at most $3\cdot \text{exp}\left(-\frac{\log(n/3)}{3}\right).$ So the proportion of 2-elements in $S_n$ tends to zero as $n$ goes to infinity and the same is true for $A_n$ and $S_n \setminus A_n$.
 \end{proof}


\section{$p$-positive profinite groups}\label{p-pos-prof-grps}

In this section our aim is to prove Theorem \ref{main}. The first observation is that if a profinite group $G$ is $p$-positive then $P_p(G)=P_p(1,G)>0.$ If $p$ is odd, we immediately deduce from Corollary \ref{secondocor} that
$G$ is virtually prosolvable. We are going to prove that this is true also for $p=2$, but this requires some additional work which uses again some consequences of the classification of the finite non-abelian simple groups. In particular a key role is played by the following lemma.

\begin{lemma}\label{univbound}There exists a positive real number $\epsilon$ with the following property. For every finite group $G$, every non-abelian minimal normal subgroup $N$ of $G$ and every $x\in \Omega_2(G)$ 
	there exists $z\in \Omega_2(Nx)$ such that $$P_2(z,G)\leq (1-\varepsilon) P_2(xN,G/N).$$
\end{lemma}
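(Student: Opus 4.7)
The plan is to analyze the natural projection $\sigma\colon G\to G/N$ and the image of $\Omega_2(z,G)$ under it. Write $N\cong S^t$ for a non-abelian simple group $S$. The starting observation is that if $y\in \Omega_2(z,G)$ then $yN\in \Omega_2(zN,G/N)=\Omega_2(xN,G/N)$, so
$$|\Omega_2(z,G)|\leq |N|\cdot|\Omega_2(xN,G/N)|.$$
The aim is to upgrade this trivial inequality by a uniform multiplicative factor, by showing that, for a suitable $z\in \Omega_2(Nx)$ and every coset $cN\in \Omega_2(xN,G/N)$, one has
$$|\Omega_2(z,G)\cap cN|\leq (1-\varepsilon)|N|.$$
Summing over cosets would then yield the claim.

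First, I would reduce this coset bound to a statement about the almost simple pieces. Since every $y\in \Omega_2(z,G)\cap cN$ is in particular a 2-element, $|\Omega_2(z,G)\cap cN|\leq |\Omega_2(cN)|$, and together with the wreath-product analysis of $N=S^t$ carried out in the proof of \cref{tre}, the required bound reduces to the almost simple statement: there exists a universal $\gamma<1$ such that, for every non-abelian simple group $S$ and every $2$-element $\tilde z$ in a suitable almost simple overgroup $\langle \tilde z,S\rangle$, the number of $s\in S$ with $\langle s,\tilde z\rangle$ a $2$-group is at most $\gamma|S|$. This is analogous in spirit to \cref{corx3}, and whenever $S$ is not of the form $\text{PSL}(2,3^f)$ with $f$ a $2$-power it is already implied by the coset $2$-element count $|\Omega_2(\tilde z S)|\leq \gamma|S|$, via \cref{norsyl}, \cref{corx3}, and \cref{anchealt}.

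The main obstacle is the exceptional family $S\cong \text{PSL}(2,3^{2^a})$ isolated in \cref{onlypsl}, where by \cref{anchepsl} whole cosets can consist of $2$-elements and so the coset $2$-element count alone is useless. In this case I would exploit the strictly stronger content of ``$\langle s,\tilde z\rangle$ is a $2$-group'', which forces $\langle s,\tilde z\rangle\cap S$ to be a $2$-subgroup of $S$; in particular the iterated commutators $[s,\tilde z], [s,\tilde z,\tilde z],\dots$ must all be $2$-elements of $S$. Working inside $\text{PGL}(2,3^f)\rtimes \langle \tilde\phi\rangle$ via the Shintani-descent bijection used in the proof of \cref{anchepsl}, one can control these commutators explicitly and produce a proportion of $s\in S$, bounded below uniformly in $f=2^a$, for which the $\tilde z$-orbit of $s$ in $S$ generates a subgroup containing an element of order~$3$. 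For those $s$, $\langle s,\tilde z\rangle$ is not a $2$-group, giving the needed $\gamma<1$.

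Given the almost simple bound, the wreath-product argument of \cref{tre} transports the inequality to $N=S^t$ for every coset of $N$ in $G$, and one chooses $z\in \Omega_2(Nx)$ whose induced action on $N$ is non-degenerate on each wreath factor. The step that I expect to do the real work is the case of $\text{PSL}(2,3^{2^a})$: this is where the $2$-element count alone is insufficient, and the $2$-generation condition, together with the rigid Frobenius--Shintani structure, must be used essentially to save the multiplicative factor.
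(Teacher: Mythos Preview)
Your overall shape—reduce to a per-coset bound $|\Omega_2(z,G)\cap cN|\leq (1-\varepsilon)|N|$ and then push the problem into a single copy of $S$ via the wreath embedding—matches the paper's. But there is a genuine gap in how you propose to finish.

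For the ``non-exceptional'' simple groups you appeal to a uniform coset bound $|\Omega_2(\tilde z S)|\leq \gamma|S|$ and cite \cref{norsyl}, \cref{corx3}, \cref{anchealt}. Those references do not give what you need: \cref{norsyl} and \cref{corx3} are statements about \emph{odd} primes (Sylow $2$-subgroups are routinely self-normalizing in nonsolvable groups, so the $|N_G(P):P|\geq 2$ mechanism is unavailable for $p=2$), and \cref{anchealt} covers only alternating groups. A uniform $\gamma<1$ valid for all non-$\mathrm{PSL}(2,3^{2^a})$ simple groups is precisely Question~\ref{quest} in the paper, which is left open. So your non-exceptional branch currently rests on an unproved hypothesis, and once you drop to the crude inequality $|\Omega_2(z,G)\cap cN|\leq |\Omega_2(cN)|$ you have also discarded the freedom in choosing $z$, which is exactly the leverage needed to avoid that open problem.

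The paper proceeds differently and uniformly. It does not count $2$-elements in cosets at all. Instead it chooses $z\in\Omega_2(xN)$ so that some $2$-power $z^{2^{t_1}}$ projects to a \emph{nontrivial} element $u_1$ in one $\aut(S)$-coordinate (tweaking $x$ by $(s,1,\dots,1)$ with $1\neq s\in\Omega_2(S)$ if necessary). Then, for any $y$ and any choice of the remaining coordinates of $n$, the first coordinate of $(ny)^d$ ranges over a full $S$-coset as $s_1$ varies; the paper invokes \cite[Corollary~1.2]{fgg}, which supplies a universal $\varepsilon>0$ such that for every nontrivial $u_1\in\aut(S)$ a proportion at least $\varepsilon$ of elements of $S$ generate with $u_1$ a nonsolvable subgroup. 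That single probabilistic-generation input handles all simple groups simultaneously—including the $\mathrm{PSL}(2,3^{2^a})$ family—so no case split and no Shintani analysis is required. Your proposed commutator/Shintani treatment of the exceptional family might be made to work, but it is not needed, and without a fix for the non-exceptional branch the argument as written does not go through.
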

\begin{proof}Let $\Sigma$ be the set of the elements $y\in G$ such that $\langle x,y\rangle N/N$ is a 2-group. For any $z\in \Omega_2(xN)$ and $y\in \Sigma$, define $\Lambda_{z,y}=\{n\in N\mid \langle z,yn\rangle \text { is a 2-group}\}.$
If we prove that there exists a $z \in \Omega_2(xN)$ such that $|\Lambda_{z,y}| \leq (1-\varepsilon)|N|$ for every $y\in \Sigma,$ then the statement follows from the equality
\[\Omega_2(z,G) = \bigcup_{y\in \Sigma} \Lambda_{z,y}.\]
Considering the map $\gamma: G\to \aut N$ induced by the conjugation action, we have that
$|\Lambda_{z,y}|\leq |\{n\in N\mid \langle z^\gamma,(yn)^\gamma \rangle \text { is a 2-group}\}|.$ So, we may assume that $\gamma$ is injective. In this case, $N=S^m$ with $S$ a finite non-abelian simple group and $G\leq \aut(S)\wr S_m.$ We have
$x=(h_1,\dots,h_m)\sigma$ with $h_i\in \aut(S)$ and $\sigma\in S_m.$ We write $\sigma=\sigma_1 \cdots \sigma_r$ as a product of disjoint cycles. Since $x\in \Omega_2(G)$, then $|\sigma_i|=2^{t_i}$ for some non-negative integer $t_i.$ We may assume
$2^{t_1}\geq \dots \geq 2^{t_r}$ and $\sigma_1=(1,\dots,2^{t_1}).$
We have $x^{2^{t_1}}=(k_1,\dots,k_m)$ with $k_i\in \aut(S).$ In particular, $k_1=h_1\cdots h_{2^{t_1}}.$ If $k_1\neq 1,$ we  take $z=x.$ If $k_1=1,$ we consider $z=(s,1,\dots,1)x.$ Notice that
$z^{2^{t_1}}=(s,\dots,s,k_{2^{t_1}+1},\dots,k_m)$, therefore, if we choose $1\neq s\in \Omega_2(S)$, then $z\in \Omega_2(xN)$  and $z^{2^{t_1}}=(u_1,\dots,u_m)\in (\aut(S))^m$ with
$u_1\neq 1.$ 

Now, assume $y=(v_1,\dots,v_m)\tau$ and write $\tau=\tau_1\cdots \tau_l$ as product of disjoint cycles, with $\tau_1=(1,j_2,\dots,j_d)$. Let $n=(s_1,\dots,s_m)\in N$ and consider the subgroups $S_1=\{(s,1,\dots,1)\mid s\in S\}\cong S$ and $X=N_G(S_1)$, and let $\nu: X\to \aut(S_1)$ be the homomorphism which maps $x$ to the automorphism of $S_1$ induced by the conjugation with $x$. We have that $z^{2^{t_1}}$ and $(ny)^d$ belong to $X$ with $\nu(z^{2^{t_1}})=u_1$ and $\nu((ny)^d)=s_1v_1s_{j_2}v_{j_2}\cdots s_{j_d}v_{j_d}.$
Choose $s_2,\dots,s_m$ arbitrarily. By \cite[Corollary 1.2]{fgg}, with probability at least $\epsilon$, a randomly chosen element $s_1\in S$ has the property that $\langle u_1, s_1v_1s_{j_2}v_{j_2}\cdots s_{j_d}v_{j_d}\rangle$ is nonsolvable, and consequently $\langle z, ny\rangle$ is not a 2-group. In particular, $|\Lambda_{z,y}|\leq (1-\varepsilon)|N|.$
\end{proof}

\begin{cor}\label{virtprosolv-2}
  If a profinite group $G$ is $p$-positive, then $G$ is virtually prosolvable.
\end{cor}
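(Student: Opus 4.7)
For $p$ odd the corollary is immediate: since $1\in \Omega_p(G)$ and $G$ is $p$-positive, $P_p(G)=P_p(1,G)>0$, and \cref{secondocor} applies. The substance of the statement is thus the case $p=2$, which I would argue by contradiction using \cref{univbound}.

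Suppose $G$ is $2$-positive but not virtually prosolvable. Since the prosolvable radical of $G$ is then not open, the number of non-abelian chief factors in finite quotients $G/N$ grows unboundedly as $N$ shrinks, so I would fix a descending chain $G=K_{0}\supseteq K_{1}\supseteq\cdots$ of open normal subgroups of $G$ with $\bigcap_{i}K_{i}=1$, refined so that each successive $K_{i}/K_{i+1}$ is a chief factor of $G/K_{i+1}$ and infinitely many of them are non-abelian. The plan is to build a \emph{coherent} sequence of $2$-elements $g_{i}\in G/K_{i}$, meaning $g_{i+1}$ projects to $g_{i}$ under $G/K_{i+1}\to G/K_{i}$, whose inverse limit $g^{*}\in G$ satisfies $P_{2}(g^{*},G)=0$, contradicting $2$-positivity.

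Starting from $g_{0}=1$, the inductive step depends on the chief factor. If $K_{i}/K_{i+1}$ is abelian and a $2$-group, any preimage of $g_{i}$ in $G/K_{i+1}$ is already a $2$-element; if it is abelian of odd order, I would apply Schur--Zassenhaus to the preimage of $\langle g_{i}\rangle$ (which has the form odd $\rtimes$ $2$-group) to obtain a $2$-subgroup complement projecting isomorphically onto $\langle g_{i}\rangle$ and select $g_{i+1}$ in it mapping exactly to $g_{i}$. In either abelian subcase, $P_{2}(g_{i+1},G/K_{i+1})\leq P_{2}(g_{i},G/K_{i})$ by non-increase of $P_{2}$ under surjective projections. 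If $K_{i}/K_{i+1}$ is non-abelian, I would first lift $g_{i}$ to an auxiliary $2$-element $x\in G/K_{i+1}$ in the same way, then apply \cref{univbound} inside $G/K_{i+1}$ with $N=K_{i}/K_{i+1}$: it supplies $z\in\Omega_{2}(Nx)$ with
\[P_{2}(z,G/K_{i+1})\leq (1-\varepsilon)\, P_{2}(xN,(G/K_{i+1})/N)=(1-\varepsilon)\, P_{2}(g_{i},G/K_{i}).\]
Since $z\in Nx$ still maps to $g_{i}$, setting $g_{i+1}=z$ preserves coherence while contracting $P_{2}$ by the factor $(1-\varepsilon)$.

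Bookkeeping then yields $P_{2}(g_{i},G/K_{i})\leq (1-\varepsilon)^{t_{i}}$, where $t_{i}$ counts the non-abelian chief factors among $K_{0}/K_{1},\ldots,K_{i-1}/K_{i}$; by the choice of the series, $t_{i}\to\infty$. The inverse limit $g^{*}\in\varprojlim G/K_{i}=G$ is a $2$-element (its image in each $G/K_{i}$ is), and for every $i$ one has $P_{2}(g^{*},G)\leq P_{2}(g_{i},G/K_{i})\to 0$, forcing $P_{2}(g^{*},G)=0$ and contradicting the hypothesis. The delicate point I expect to absorb the most care is the odd-order abelian step: taking the naive $2$-part of an arbitrary preimage typically maps to a power $g_{i}^{m}$ rather than to $g_{i}$ itself and breaks coherence, so the Schur--Zassenhaus complement is essential for the inverse-limit argument to close; beyond that, the proof is a clean iteration of \cref{univbound}.
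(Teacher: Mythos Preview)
Your argument is correct and follows essentially the same route as the paper's: iterate \cref{univbound} along a descending chain with infinitely many non-abelian chief factors to build a coherent sequence of $2$-elements whose limit $g^*$ satisfies $P_2(g^*,G)=0$. The paper streamlines two of your steps---it records only the non-abelian jumps $N_t/M_t$ (rather than a full chief-series refinement) and lifts each $g_tM_t$ directly to $\Omega_2(G)$ via Sylow surjectivity, thereby avoiding your case split on abelian factors and the Schur--Zassenhaus detour; note also that your hypothesis $\bigcap_i K_i=1$ need not hold when $G$ is not countably based, but, as in the paper, one may harmlessly replace $G$ by $G/\bigcap_i K_i$.
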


\begin{proof} As we noticed before, if $G$ is $p$-positive then $P_p(G)=P_p(1,G)>0.$ If $p$ is odd, it follows  from Corollary \ref{secondocor} that
	$G$ is virtually prosolvable.
	So it suffices to prove that if
a profinite group $G$ is not virtually prosolvable, then there exists $g\in G$ such that $P_2(g,G)=0.$

    Assume that $G$ is not virtually prosolvable. Then there exists a descending chain of open normal subgroups of $G$
	$$G\geq N_1>M_1\geq N_2>M_2\geq N_3>M_3\dots$$
	such that $N_t/M_t$ is a non-abelian minimal normal subgroup of
	$G/M_t$ for every $t\in \mathbb N.$ Since $P_2(G)\leq P_2(G/L)$ for every closed normal $L$ subgroup of $G$, it is not restrictive to assume $\cap_{t\in \mathbb N}M_t=1.$  In particular, we may identify $G\cong \varprojlim_{i \in \mathbb N}G/M_i$ with the subgroup of the cartesian product $\prod_{i\in \mathbb N}G/M_i$ consisting of the elements $(g_iM_i)_{i\in \mathbb N}$ with $g_iM_j=g_jM_j$ whenever $j\leq i.$
  
   Consider $g_1 \in G$ such that $g_1M_1 \in \Omega_2(G/M_1)$. We may assume that $g_1 \in \Omega_2(G)$. In this case, $g_1 N_2\in \Omega_2(G/N_2)$ and, by applying \cref{univbound}, we can find $g_2 \in g_1 N_2$ such that
    \[P_2(g_2M_2,G/M_2) \leq (1-\varepsilon)P_2(g_1 N_2, G/N_2) \leq (1-\varepsilon)P_2(g_1M_1, G/M_1).\]
    By construction, it follows also that $g_2M_1 = g_1M_1$. Inductively, we obtain a sequence $(g_tM_t)_{t \in \mathbb{N}}$ of elements of $G$ such that $g=(g_tM_t)_{t\in \mathbb N} \in \Omega_2(G)$ and
    \[P_2(g,G)=\inf_{t\in \mathbb N} P_2(g_{t+1}M_{t+1},G/M_{t+1})\leq \inf_{t\in \mathbb N}(1-\varepsilon)^tP_2(g_1M_1,G/M_1)=0.
	\qedhere\]
\end{proof}


\begin{lemma}\label{gxfinite}
    Let $G$ be a finite group, $N$ an abelian normal subgroup of $G$ and $g$ a $p$-element of $G.$ If $p$ does not divide $|N|,$ then
$$\frac{P_p(g,G)}{P_p(gN,G/N)}=
\frac{1}{|N||\Omega_p(gN,G/N)|}\sum_{xN \in \Omega_p(gN,G/N)}\frac{|C_N(g)|}{|C_N(g)\cap C_N(x)|}.$$
In particular, if $C_N(g)\neq N,$ then
$P_p(g,G)\leq P_p(gN,G/N)/2.$
\end{lemma}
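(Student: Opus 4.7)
The idea is to partition $\Omega_p(g,G)$ along the cosets of $N$ and, for each coset $xN$ with $xN\in \Omega_p(gN,G/N)$, to compute $|\Omega_p(g,G)\cap xN|$ as the size of a single orbit of $C_N(g)$ acting by conjugation. Fix such a coset and set $K=\langle g,x\rangle N$. Since $K/N=\langle gN,xN\rangle$ is a $p$-group and $N$ is an abelian normal $p'$-subgroup of $K$, Schur--Zassenhaus applies: the Sylow $p$-subgroups of $K$ are exactly the complements of $N$ in $K$, they are pairwise conjugate by elements of $N$, and each of them meets the coset $xN$ in exactly one element. In particular $\Omega_p(g,G)\cap xN$ is nonempty, since $g$ lies in some Sylow $P$ of $K$ and then the unique element of $P\cap xN$ generates with $g$ a $p$-subgroup of $P$.

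The key observation is that $\langle g,xn\rangle$ is a $p$-group iff $g$ and $xn$ lie in a common Sylow $p$-subgroup of $K$. To enumerate the Sylows containing $g$, fix $P\ni g$ and write every Sylow as $P^n$ with $n\in N$. A short commutator computation, together with $N\cap P=1$, shows $P^n\ni g$ iff $[n,g]=1$, that is, iff $n\in C_N(g)$. Consequently any two Sylow $p$-subgroups of $K$ containing $g$ are conjugate by an element of $C_N(g)$.

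Now let $C_N(g)$ act by conjugation on $\Omega_p(g,G)\cap xN$. Transitivity is immediate from the previous paragraph: given two elements $y_1,y_2$ in this set, choose Sylows $Q_i\ni g,y_i$; there exists $c\in C_N(g)$ with $Q_2=cQ_1c^{-1}$, and $cy_1c^{-1}$ must coincide with the unique element of $Q_2\cap xN$, namely $y_2$. The stabilizer of any $y=xn$ is $C_N(g)\cap C_N(y)$; because $N$ is abelian, an element $c\in N$ satisfies $cxnc^{-1}=xn$ iff $cxc^{-1}=x$, so this stabilizer equals $C_N(g)\cap C_N(x)$. Orbit--stabilizer then yields
\[
|\Omega_p(g,G)\cap xN|=\frac{|C_N(g)|}{|C_N(g)\cap C_N(x)|},
\]
and summing over $xN\in \Omega_p(gN,G/N)$, together with $|G|=|N|\cdot|G/N|$, produces the claimed identity.

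For the ``in particular'' statement, observe that if $C_N(g)\neq N$ then $[N:C_N(g)]\ge 2$, so each summand is bounded by $|C_N(g)|\le |N|/2$, and the desired inequality follows immediately. The only genuinely non-routine step is the transitivity of the $C_N(g)$-action, which rests on the fact, specific to an abelian normal $p'$-complement, that all complements are $N$-conjugate.
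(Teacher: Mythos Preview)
Your proof is correct and follows essentially the same route as the paper's: partition $\Omega_p(g,G)$ over the cosets $xN\in\Omega_p(gN,G/N)$, and for each such coset identify $\Omega_p(g,G)\cap xN$ with the $C_N(g)$-orbit of a fixed representative, using that the Sylow $p$-subgroups (equivalently, complements of $N$) in $\langle g,x\rangle N$ are all $N$-conjugate and that $g\in P^n$ forces $n\in C_N(g)$. The paper phrases the count slightly more tersely (it directly observes that $\langle g,xn\rangle\le P^y$ forces $g^y=g$ and $xn=x^y$, hence the count is $|C_N(g):C_N(g)\cap C_N(x)|$), while you spell out the orbit--stabilizer formulation more explicitly, but the argument is the same.
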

\begin{proof}
    Take $xN \in \Omega_p(gN,G/N).$ There exists a Sylow $p$-subgroup $P$ of $G$ such that $\langle g, x\rangle \leq PN$ and it is not restrictive to assume $\langle g, x\rangle \leq P.$ Suppose that $n\in N$ is such that $\langle g, xn\rangle$ is a $p$-group. Then there exists $y \in N$ such that $\langle g, xn\rangle \leq P^y.$ Since $P\cap N=1,$ it follows that $g^y=g$ and $xn=x^y$ and therefore the number of $n\in N$ such that $\langle g,xn\rangle$ is a $p$-group coincides with  the number $|C_N(g):C_N(g)\cap C_N(x)|$ of  conjugates of $x$ with elements of $C_N(g).$  Now, from
    \[\Omega_p(g,G) = \bigsqcup_{xN \in \Omega_p(gN,G/N)} \Omega_p(g,G)\cap xN\]
    we get the desired results.
\end{proof}

To state the following result we need a definition. Let $G$ be a finite group, $p$ a prime and $g$ a $p$-element of $G$. We denote by $\omega_p(g,G)$ the number of abelian factors $X/Y$ in a chief series of $G$ such that $p$ does not divide $|X/Y|$ and $g$ does not centralize $X/Y.$ 

\begin{cor}\label{bbb}
	If $G$ is a finite group and $g$ is a $p$-element of $G$, then 
	$$P_p(g,G)\leq \frac{1}{2^{\omega_p(g,G)}}.$$
\end{cor}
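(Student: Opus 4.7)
The plan is to telescope along a chief series of $G$, picking up a factor $\tfrac{1}{2}$ precisely for each chief factor that contributes to $\omega_p(g,G)$ and using only the trivial quotient bound on the remaining factors. Fix a chief series $1 = G_0 \lhd G_1 \lhd \cdots \lhd G_n = G$ and, for each $0 \leq i \leq n$, set $\overline{G}_i := G/G_i$ and $\overline{g}_i := gG_i$, so that $\overline{g}_i$ is a $p$-element of $\overline{G}_i$ and $P_p(\overline{g}_n, \overline{G}_n) = 1$. For each $i = 1,\dots,n$ I would establish the single-step inequality
$$P_p(\overline{g}_{i-1}, \overline{G}_{i-1}) \leq c_i \cdot P_p(\overline{g}_i, \overline{G}_i),$$
where $c_i = \tfrac{1}{2}$ when the chief factor $G_i/G_{i-1}$ is abelian, of order coprime to $p$, and not centralised by $g$, and $c_i = 1$ otherwise.

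For the default case $c_i = 1$, the quotient map $\overline{G}_{i-1} \twoheadrightarrow \overline{G}_i$ sends $\Omega_p(\overline{g}_{i-1}, \overline{G}_{i-1})$ into $\Omega_p(\overline{g}_i, \overline{G}_i)$ (since any quotient of a $p$-group is a $p$-group), with fibres of cardinality $|G_i/G_{i-1}|$; dividing by $|\overline{G}_{i-1}|$ immediately gives $P_p(\overline{g}_{i-1}, \overline{G}_{i-1}) \leq P_p(\overline{g}_i, \overline{G}_i)$. In the case $c_i = \tfrac{1}{2}$, the three hypotheses are designed precisely to match those of \cref{gxfinite} applied inside $\overline{G}_{i-1}$ with abelian normal subgroup $N = G_i/G_{i-1}$ and $p$-element $\overline{g}_{i-1}$: indeed $p \nmid |N|$, and non-centralisation gives $C_N(\overline{g}_{i-1}) \neq N$, so the lemma yields $P_p(\overline{g}_{i-1}, \overline{G}_{i-1}) \leq \tfrac{1}{2} P_p(\overline{g}_i, \overline{G}_i)$.

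Multiplying these $n$ inequalities collapses the telescope to
$$P_p(g, G) = P_p(\overline{g}_0, \overline{G}_0) \leq \left(\prod_{i=1}^n c_i\right) \cdot P_p(\overline{g}_n, \overline{G}_n) = 2^{-k},$$
where $k$ counts the indices $i$ for which $c_i = \tfrac{1}{2}$. The Jordan--H\"older theorem for chief series shows that the multiset of $G$-isomorphism types of the factors $G_i/G_{i-1}$ does not depend on the chosen chief series, and both the conditions ``$p \nmid |X/Y|$'' and ``$g$ centralises $X/Y$'' depend only on this $G$-module isomorphism type; hence $k = \omega_p(g,G)$, proving the bound. I do not expect serious obstacles: the only subtlety is the well-definedness of centralisation on an abelian chief factor (standard, since conjugation by $g$ induces a canonical action there), and the only ingredient beyond bookkeeping is the already-proven \cref{gxfinite}.
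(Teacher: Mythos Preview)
Your proof is correct and follows essentially the same approach as the paper: telescope along a chief series and apply \cref{gxfinite} at each abelian $p'$-factor not centralised by $g$, using the trivial quotient bound elsewhere. Your added Jordan--H\"older remark justifying the well-definedness of $\omega_p(g,G)$ is a nice touch that the paper leaves implicit.
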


\begin{proof}
    Take a chief series
   $1 = N_t \unlhd N_{t-1} \unlhd \cdots \unlhd N_0 = G$
    of $G$ and apply \cref{gxfinite} repeatedly to the abelian $p'$-chief factors which are not centralized by $g$.
\end{proof}

Let us recall a result concerning finite solvable groups.

\begin{thm}\label{thm_dh} Let $G$ be a finite solvable group and $p$ a prime divisor of $G$. Then a $p$-element $g$ of $G$ centralizes all the $p'$-chief factors of $G$ if and only if $g \in O_p(G).$
\end{thm}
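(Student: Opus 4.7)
The plan is to prove the two directions separately, with the forward direction being essentially a commutator/coprimality observation and the reverse direction reducing to a Fitting-subgroup argument under the assumption $O_p(G)=1$.

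For the easy direction, suppose $g\in O_p(G)$ and let $X/Y$ be a $p'$-chief factor of $G$. In the quotient $G/Y$, the subgroup $O_p(G)Y/Y$ is a normal $p$-subgroup while $X/Y$ is a normal $p'$-subgroup, so their commutator lies in their intersection, which is trivial. Hence $[g,X]\le Y$, i.e.\ $g$ centralizes $X/Y$.

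For the converse, the first step is to reduce to the case $O_p(G)=1$. If we set $\bar G = G/O_p(G)$ and $\bar g = gO_p(G)$, then $\bar g$ is still a $p$-element, and every $p'$-chief factor of $\bar G$ lifts to a $p'$-chief factor of $G$ lying above $O_p(G)$, which $g$ centralizes by hypothesis. Since $O_p(\bar G)=1$, it suffices to show that, in this reduced situation, $g$ itself must be trivial.

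Under the assumption $O_p(G)=1$, the key tool is that $G$ is solvable, so $F(G)=\prod_{q}O_q(G)=O_{p'}(G)$ is a $p'$-group and satisfies $C_G(F(G))\le F(G)$. Refining a chief series of $G$ through $F(G)$ and through each $\Phi(O_q(G))$, every $G$-chief factor inside $F(G)$ is a $p'$-chief factor, hence centralized by $g$. In particular, $g$ centralizes $O_q(G)/\Phi(O_q(G))$ for each prime $q\neq p$, and since $g$ has order coprime to $q$, the standard Burnside-type result on coprime action modulo the Frattini subgroup shows that $g$ centralizes each $O_q(G)$, and therefore all of $F(G)$. Thus $g\in C_G(F(G))\le F(G)=O_{p'}(G)$, and since $g$ is a $p$-element, $g=1$, as required.

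The main obstacle is the step that upgrades ``$g$ centralizes every $p'$-chief factor inside $F(G)$'' to ``$g$ centralizes $F(G)$''; this is where the coprime-action / Frattini argument is essential and where the hypothesis that $G$ is solvable enters twice (once to force $F(G)=O_{p'}(G)$ after killing $O_p(G)$, and once for the self-centralizing property of $F(G)$). The rest of the argument is routine bookkeeping about chief series and coprime orders.
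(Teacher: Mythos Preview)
Your proof is correct, and it takes a genuinely different route from the paper's. The paper invokes the formation-theoretic identity $C_q(G)=O_{q',q}(G)$ from Doerk--Hawkes (Theorem~13.8) for each prime $q$, then shows that the Sylow $p$-subgroup of $\bigcap_{q\neq p}O_{q',q}(G)$ is exactly $O_p(G)$ by using that each $O_{q',q}(G)$ is $q$-nilpotent and intersecting the normal $q$-complements. Your argument instead reduces to $O_p(G)=1$, uses the self-centralizing property $C_G(F(G))\le F(G)$ of the Fitting subgroup in solvable groups, and then a coprime-action lemma (trivial action on $O_q(G)/\Phi(O_q(G))$ forces trivial action on $O_q(G)$) to force $g\in F(G)=O_{p'}(G)$, whence $g=1$.

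What each approach buys: the paper's route is shorter if one is willing to quote the Doerk--Hawkes machinery, and it identifies the full subgroup of elements centralizing all $q$-chief factors in one stroke. Your route is more elementary and self-contained, relying only on standard facts about the Fitting subgroup and coprime action, and it makes the role of solvability (needed twice, as you note) transparent. One minor remark: your Frattini detour is not actually necessary---since $g$ centralizes every $G$-chief factor inside $O_q(G)$ and has order coprime to $q$, the usual stability-group lemma for coprime action already gives $[g,O_q(G)]=1$ directly, without passing through $O_q(G)/\Phi(O_q(G))$.
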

\begin{proof}
Let $q$ be a prime divisor of $|G|$ and let $C_q(G)$ be the intersection of the centralizers of the chief factors of $G$ whose order is divisible by $q
.$ By \cite[Theorem 13.8]{dh}, we have that $C_q(G)=O_{q^\prime,q}(G).$ So it is sufficient to prove that a Sylow $p$-subgroup $P$ of $C=\cap_{q\neq p}O_{q^\prime,q}(G)$ is normal in $C$ and coincides with $O_p(G).$
It follows from \cite[Theorem 13.4]{dh}
that $C$ is $q$-nilpotent for every $q\neq p.$ If $K_q$ is the normal $q$-complement in $C$, then $P=\cap_{q\neq p}K_q$.  Since $C$ is a normal subgroup of $G$, it follows that $P\leq O_p(G).$ On the other hand $O_p(G)\leq O_{q^\prime,q}(G)$ for every $q\neq p,$ so $O_p(G)\leq P.$
\end{proof}

	For a profinite group $G$ and a $p$-element $g \in G$, we say that $g$ \emph{centralizes  all the abelian chief factors of $G$ of order coprime to $p$} (henceforth, $p'$-chief factors) 
 if $g$ centralizes all abelian chief factors of $G/N$ for every open normal subgroup $N$ of $G.$ We will say that $g$ \emph{centralizes  almost all the abelian chief factors of $G$ of order coprime to $p$} if
		there exists a non-negative integer $t$ such that, for every open normal subgroup $N$ of $G$,  a chief series of $G/N$ contains at most $t$ abelian factors of order coprime to $p$ that are not centralized by $g.$ With these definitions, we may formulate Theorem \ref{thm_dh} in the context of profinite groups.
\begin{cor}\label{opg}
	Let $G$ be a prosolvable group and let $g$ be a $p$-element of $G$ which centralizes all the chief factors of $G$ whose  orders are coprime with $p.$ Then $g \in O_p(G).$
\end{cor}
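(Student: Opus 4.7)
The plan is to reduce to the finite case via the inverse-limit description $G=\varprojlim G/N$, where $N$ ranges over the open normal subgroups of $G$.

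First I would establish the general characterization
\[O_p(G)=\{h\in G\mid hN\in O_p(G/N)\text{ for every open normal }N\nn G\}.\]
The inclusion $\subseteq$ is immediate, since the continuous image of a normal pro-$p$ subgroup is a normal $p$-subgroup. For the reverse inclusion, let $H$ denote the right-hand set. Then $H$ is a closed normal subgroup of $G$, and for every open normal $N$ the image $HN/N$ lies in $O_p(G/N)$, hence is a finite $p$-group. So every finite continuous quotient of $H$ is a $p$-group, whence $H$ is pro-$p$ and therefore $H\leq O_p(G)$.

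Next I would verify that the hypothesis transfers to every finite quotient: a chief factor of $G/N$ corresponds to a chief factor $X/Y$ of $G$ with $N\leq Y<X$, and if it has order coprime to $p$, then by hypothesis $g$ centralizes $X/Y$, so $gN$ centralizes the corresponding factor of $G/N$. Thus $gN$ is a $p$-element of the finite solvable group $G/N$ centralizing all of its $p'$-chief factors, and Theorem \ref{thm_dh} yields $gN\in O_p(G/N)$. Since this holds for every open normal $N$, the characterization of the first step gives $g\in O_p(G)$.

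The argument is essentially the translation of Theorem \ref{thm_dh} through the inverse limit, so no serious obstacle is expected; the only point that needs some care is the pro-$p$ characterization in the first step, which rests on the standard fact that a profinite group all of whose finite continuous quotients are $p$-groups is itself pro-$p$.
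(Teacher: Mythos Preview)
Your proposal is correct and is precisely the inverse-limit argument the paper leaves implicit (the paper gives no proof, presenting the corollary as the profinite reformulation of Theorem~\ref{thm_dh}). Note that, with the paper's definition of ``centralizes all $p'$-chief factors of $G$'' given just before the corollary, your second step is literally the definition, so only the characterization $O_p(G)=\{h\mid hN\in O_p(G/N)\text{ for all open }N\nn G\}$ in your first step carries any content.
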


\begin{thm}\label{42}
	Let $G$ be a  virtually prosolvable profinite group and let $g$ be a $p$-element of $G$. 
	If $P_p(g,G)>0,$ then there is an open subgroup $H$ of $G$ such that  $g \in O_p(H).$ In particular, $g$ has finite order modulo $O_p(G).$ 
\end{thm}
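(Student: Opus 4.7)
The plan is to combine the bound from \cref{bbb} with a monotonicity/stabilization argument for $\omega_p(gN,G/N)$ in order to isolate an open normal prosolvable subgroup $N_0 \unlhd G$ inside which $g$ centralizes every abelian $p'$-chief factor of $G$. Then $H := \langle g, N_0\rangle$ will be the desired open subgroup, and $g \in O_p(H)$ will follow from \cref{opg}.

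First, for every open normal $N$ of $G$ one has $P_p(gN,G/N) \geq P_p(g,G) > 0$, so \cref{bbb} yields the uniform bound $\omega_p(gN,G/N) \leq T := \log_2(1/P_p(g,G))$. A refinement argument shows that $\omega_p(gN,G/N)$ is non-decreasing as $N$ descends: any chief series of $G/N$ lifts to one of $G/N'$ for $N' \leq N$ by prepending a $G$-chief series of $N/N'$, and no previously centralized factor becomes non-centralized. Being bounded, this function stabilizes. Fix an open normal prosolvable subgroup $K \unlhd G$ (available by virtual prosolvability) and choose an open normal $N_0 \leq K$ of $G$ at which this maximum is attained. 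Then no new non-centralized $p'$-chief factor can appear below $N_0$, so $g$ centralizes every abelian $p'$-chief factor of $G$ contained in $N_0$.

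Next, set $H := \langle g, N_0\rangle$; it is open in $G$ (since $H \supseteq N_0$) and prosolvable (as $N_0 \leq K$ is prosolvable and $H/N_0$ is a finite cyclic $p$-group). To apply \cref{opg} to $H$, one must check that $g$ centralizes every $p'$-chief factor of $H/M$ for every open normal $M$ of $H$. Replacing $M$ by $N_0 \cap \mathrm{core}_G(M)$, we may assume $M$ is open normal in $G$ with $M \leq N_0$. Pick a chief series of $H/M$ passing through $N_0/M$: the factors above $N_0/M$ sit inside the cyclic $p$-group $H/N_0$ and are thus $p$-factors, while each factor below is an $H$-chief subquotient of some $G$-chief factor of $N_0/M$. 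On every $p'$-$G$-chief factor inside $N_0/M$ the element $g$ acts trivially by the choice of $N_0$, hence also on every $H$-chief subquotient of such a factor. Thus every $p'$-chief factor of $H/M$ is centralized by $g$, and \cref{opg} applied to the prosolvable group $H$ gives $g \in O_p(H)$.

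For the ``in particular'' clause, let $H_0 := \bigcap_{x \in G} H^x$ be the normal core of $H$ in $G$, which is open and normal in $G$. Since $H/H_0$ is finite, $gH_0$ has finite $p$-power order, say $p^m$, so $g^{p^m} \in H_0 \cap O_p(H)$. The latter is a normal pro-$p$ subgroup of $H_0$, hence contained in the characteristic subgroup $O_p(H_0)$; because $H_0 \unlhd G$, $O_p(H_0)$ is normal pro-$p$ in $G$, and therefore $O_p(H_0) \leq O_p(G)$. Consequently $g^{p^m} \in O_p(G)$, so $g$ has finite order modulo $O_p(G)$. The delicate step is the refinement argument in the third paragraph: it is what converts the information ``$g$ centralizes all $p'$-abelian $G$-chief factors inside $N_0$'' (provided by \cref{bbb} together with stabilization) into the hypothesis needed to invoke \cref{opg} on the prosolvable subgroup $H$.
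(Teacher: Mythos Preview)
Your proof is correct and follows essentially the same strategy as the paper's: bound $\omega_p(gN,G/N)$ uniformly via \cref{bbb}, use stabilization to find an open normal prosolvable $N_0$ below which $g$ centralizes every $p'$-$G$-chief factor, set $H=\langle g,N_0\rangle$, and invoke \cref{opg}. You are in fact more careful than the paper on one point: the paper jumps directly from ``$g$ centralizes all $p'$-chief factors of $M$'' to ``$g\in O_p(M\langle g\rangle)$'' without explaining why the hypothesis of \cref{opg} (which concerns $M\langle g\rangle$-chief factors) is met, whereas your refinement argument---passing a $G$-chief series of $N_0/M$ to an $H$-chief series and observing that each $H$-factor is a subquotient of a $G$-factor---fills exactly this gap; your ``in particular'' via the normal core is a minor variant of the paper's one-line argument $\langle g\rangle\cap M\le O_p(M)\le O_p(G)$.
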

	
\begin{proof}
	Suppose $P_p(g,G)>\eta$. By \cref{bbb}, $\omega_p(gN,G/N) \leq - \log_2 \eta$ for every open normal subgroup $N$ of $G$ and we deduce from this that $g$ centralizes almost all abelian $p'$-chief factors of $G$. In particular, $K$ contains an open normal subgroup $M$ of  $G$ such that $g$ centralizes all the $p^\prime$-chief factors of $M$. It follows from \cref{opg} that $g \in O_p(M\langle g \rangle)$.
  Moreover, $\langle g\rangle \cap M \leq O_p(M)\leq O_p(G)$ implies $|gO_p(G)|\leq |G/M|.$
\end{proof}

A direct observation, which is implicit in the proof of \cref{42}, is that if a $p$-element $g$ of a profinite group $G$ centralizes all the $p'$-chief factors of an open normal prosolvable subgroup $M$ of $G$ which contains $g$, then $g\in O_p(M)$.

\begin{cor}\label{sylowfg}
Let $G$ be a $p$-positive profinite group. If the $p$-Sylow subgroups of $G$ are topologically finitely generated, then $G$ is virtually pro-$p.$
\end{cor}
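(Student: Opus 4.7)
The plan is to combine the structural consequences of $p$-positivity from \cref{virtprosolv-2} and \cref{42} with a theorem of Zelmanov to force $P/O_p(G)$ to be finite, and then to upgrade this local finiteness to the finiteness of $G/O_p(G)$ via Hall theory and a measure argument.

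First, by \cref{virtprosolv-2}, the $p$-positivity of $G$ implies that $G$ is virtually prosolvable. Let $P$ be a Sylow $p$-subgroup; since $O_p(G)$ is a normal pro-$p$ subgroup, $O_p(G) \le P$. Every element of $P$ is a $p$-element of $G$, so \cref{42} gives that every element of $P$ has finite order modulo $O_p(G)$, making $P/O_p(G)$ a torsion pro-$p$ group. Being topologically finitely generated (as a quotient of the topologically finitely generated pro-$p$ group $P$), it is finite by Zelmanov's theorem that every topologically finitely generated torsion pro-$p$ group is finite. In particular, $O_p(G)$ is open in $P$.

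Next, it suffices to show that $\bar G := G/O_p(G)$ is finite; equivalently, that $O_p(G)$ is open in $G$. Note that $\bar G$ is virtually prosolvable with trivial $O_p(\bar G)$, has finite Sylow $p$-subgroup $\bar P := P/O_p(G)$, and inherits $p$-positivity from $G$ (the quotient map is measure-preserving and sends pro-$p$ subgroups to pro-$p$ subgroups, so every $p$-element of $\bar G$ lifts to a $p$-element of $G$). If $\bar P$ is trivial, then $\Omega_p(\bar G) = \{1\}$, and the $p$-positivity at the identity forces $\mu_{\bar G}(\{1\}) > 0$, so $\bar G$ is finite. Otherwise, applying Hall-subgroup theory inside an open normal prosolvable subgroup of $\bar G$ and then taking the core would produce an open normal pro-$p'$ subgroup $\bar N$ of $\bar G$; since $\bar N \cap \bar P = 1$ and the $p$-part of $[\bar G : \bar N]$ equals $|\bar P|$, this yields the semidirect-product decomposition $\bar G = \bar N \rtimes \bar P$.

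To force $\bar N = 1$, I would apply $p$-positivity at a non-identity $\bar g \in Z(\bar P)$. Writing $\Omega_p(\bar g, \bar G)$ as the union of Sylow $p$-subgroups of $\bar G$ containing $\bar g$ (parametrized through cosets of $C_{\bar N}(\bar P)$ in $C_{\bar N}(\bar g)$) and applying Fubini on $\bar G = \bar N \rtimes \bar P$ would express $P_p(\bar g, \bar G)$ as an average over $m \in \bar P$ of $\mu_{\bar N}((\phi_m - 1)(C_{\bar N}(\bar g)))$, where $\phi_m$ denotes conjugation by $m$ on $\bar N$. Positivity of this average should force $C_{\bar N}(\bar g)$ to be open in $\bar N$ for every non-identity $\bar g \in \bar P$; combined with the faithfulness of the $\bar P$-action on $\bar N$ (which follows from $O_p(\bar G) = 1$) and a decomposition of $\bar N$ into its pro-$\ell$ components, I would rule out infinite pro-$p'$ components, so that $\bar N$, and hence $\bar G$, is finite. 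The hard part will be this final measure-theoretic analysis: while the topological finite generation hypothesis handles the pro-$p$ portion via Zelmanov, the pro-$p'$ portion lacks a comparable hypothesis and must be pinned down purely through $p$-positivity at non-identity $p$-elements.
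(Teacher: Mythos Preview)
Your reduction to $\bar G=G/O_p(G)$ with finite Sylow $p$-subgroup $\bar P$ is sound, but invoking Zelmanov is heavier than necessary and, more importantly, discards information you will need later. The paper exploits finite generation directly: applying Theorem~\ref{42} to each member of a finite topological generating set $g_1,\dots,g_d$ of $P$ yields open normal subgroups $M_i$ with $g_i$ centralising every $p'$-chief factor of $M_i$; setting $M=\bigcap_i M_i$ produces an open normal prosolvable subgroup whose $p'$-chief factors are centralised by all of $P$ (and hence, by conjugacy of Sylow subgroups, by every $p$-element of $G$). Corollary~\ref{opg} then gives $P\cap M\le O_p(G)$, so after reducing to $O_p(G)=1$ one has $P$ finite --- the same intermediate conclusion you reach via Zelmanov, but with the crucial subgroup $M$ still in hand.

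Your endgame has a genuine gap. The chain you propose --- positivity forces $C_{\bar N}(\bar g)$ open for each nontrivial $\bar g\in\bar P$, and this together with faithfulness of the $\bar P$-action and the pro-$\ell$ decomposition rules out infinite components of $\bar N$ --- is false as stated. Take $\bar N=A\times B$ where $A$ is a finite abelian $q$-group on which $\bar P$ acts faithfully and $B$ is any infinite abelian pro-$r$ group on which $\bar P$ acts trivially (with $q,r$ primes distinct from $p$). Then $C_{\bar N}(\bar g)=C_A(\bar g)\times B$ is open for every $\bar g$, the $\bar P$-action on $\bar N$ is faithful, and yet $\bar N$ is infinite. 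Of course this $\bar G$ is not $p$-positive, so there is no contradiction with the corollary; the point is that the information ``centralisers open'' which you extract from $p$-positivity is too weak to finish. The paper sidesteps this entirely by using $M$: from $P\le O_p(PM)$ and $O_p(PM)\cap M\le O_p(G)=1$ one gets that $P$ is normal in $PM$, hence $[P,M]\le P\cap M=1$. Thus $M\le N_G(P)$, so $\Omega_p(G)=\bigcup_{g\in G}P^g$ is a finite union of finite sets, and now $P_p(1,G)=\mu(\Omega_p(G))>0$ forces $G$ to be finite.
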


\begin{proof}
	Let $P$  be a $p$-Sylow subgroup of $G$
	and $g_1,\dots,g_d$ a set of topological generators for $P.$ By \cref{virtprosolv-2},
	$G$ is virtually prosolvable, so it follows from \cref{42}, that, for each $1\leq i\leq d$, there exists an open normal subgroup $M_i$ of $G$ such that $g_i$ centralizes all the $p'$-chief factors of $M_i$. Let $M=M_1\cap \dots \cap M_d.$ By construction, every $p'$-chief factor of $M$ is centralized by every element of $P$. If we consider any other $p$-element $z$, there exist $x \in P$ and $y \in G$ such that $z=x^y$. If $H/L$ is a $p'$-chief factor of $M$ and $h \in H$, then
    \[[z,h]=[x^y,h]=[x,h^{y^{-1}}]^y \in L,\] 
    so every $p$-element of $G$ centralizes every $p'$-chief factor of $M$. In particular, it follows from Corollary \ref{opg} that the $p$-Sylow subgroup $X=P\cap M$ of $M$ is normal in $G.$ We may assume $X=1$, which implies that $P$ is finite. Moreover, $P\leq O_p(PM)$ by the observation after \cref{42} and therefore $[P,M]=1.$ Thus, $N_G(P)$ contains $M$ and, consequently, $\Omega_p(G)$ is finite. Since $P_p(1,G)=\mu(\Omega_p(G))$ is positive, we conclude that $G$ is finite. 
\end{proof}

\begin{thm}\label{virtprop}
    Let $G$ be a $p$-positive profinite group. Then $G$ is virtually pro-$p$.
\end{thm}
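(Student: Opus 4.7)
The plan is to combine the principal tools from this section: first reduce to the case $O_p(G)=1$, and then establish topological finite generation of a Sylow $p$-subgroup so that \cref{sylowfg} applies. For the reduction, I would first check that $p$-positivity descends to $\bar G := G/O_p(G)$. Since $O_p(G)$ is closed, normal, and pro-$p$, any lift $g$ of a $p$-element $\bar g \in \bar G$ has $\langle g \rangle O_p(G)$ an extension of pro-$p$ by pro-$p$, hence pro-$p$; so $g$ itself is a $p$-element of $G$, and $\Omega_p(g,G) = \pi^{-1}(\Omega_p(\bar g, \bar G))$ by the same extension argument. Haar-measure invariance under $\pi$ then gives $P_p(g,G) = P_p(\bar g, \bar G) > 0$. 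Since $O_p(G)$ is pro-$p$, $G$ is virtually pro-$p$ if and only if $\bar G$ is finite, so I may assume $O_p(G) = 1$ and aim to show $G$ is finite.

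Under this reduction, \cref{virtprosolv-2} gives that $G$ is virtually prosolvable and \cref{42} forces every $p$-element of $G$ to have finite order. Fix an open prosolvable normal subgroup $M \trianglelefteq G$; then $O_p(M) \le O_p(G) = 1$. By \cref{sylowfg} it suffices to show that a Sylow $p$-subgroup $P$ of $G$ is topologically finitely generated.

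For this, fix a chief series of $G$ refining through $M$ and, for each $n$, define the closed subgroup $D_n \le P$ as the intersection of the centralizers $C_P(X/Y)$ over all $p'$-chief factors $X/Y$ at depth $\ge n$. By \cref{bbb}, each $g \in P$ lies in $D_n$ for $n$ large enough, so $P = \bigcup_n D_n$. Since $P$ is a compact Hausdorff (hence Baire) space and the cover can be arranged to be countable (by passing to a countable cofinal chain of open normal subgroups of $G$, which suffices because of the uniform-in-the-quotient bound $\omega_p(gN,G/N) \le -\log_2 P_p(g,G)$ from \cref{bbb}), some $D_{n_0}$ is open in $P$. Adjoining the finitely many centralizers $C_P(X/Y)$ for the $p'$-chief factors at depth $< n_0$ produces a further open, finite-index subgroup $D_{n_0}^*$ of $D_{n_0}$ that centralizes every $p'$-chief factor of $G$. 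Each element of $D_{n_0}^* \cap M$ is then a $p$-element of the prosolvable group $M$ centralizing all of its $p'$-chief factors, so \cref{opg} gives $D_{n_0}^* \cap M \le O_p(M) = 1$. Consequently $D_{n_0}^* \hookrightarrow G/M$, which is finite, so $D_{n_0}^*$, $D_{n_0}$, and finally $P$ (since $D_{n_0}$ is open in $P$) are all finite. Applying \cref{sylowfg} now gives $G$ virtually pro-$p$, and $O_p(G)=1$ forces $G$ to be finite.

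The main obstacle I anticipate is the Baire-category step: one needs the ascending cover $P = \bigcup_n D_n$ to be countable and the $D_n$ to be genuinely closed (not merely directed unions of smaller pieces). The countability should follow from a cofinality argument using the uniform bound in \cref{bbb}, but the bookkeeping between chief-series depths, open normal subgroups, and the $p'$-factor centralizers is the most delicate part of the argument.
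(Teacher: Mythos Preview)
Your overall strategy---reduce to $O_p(G)=1$, use \cref{bbb} to cover the Sylow $p$-subgroup $P$ by the closed subgroups $D_n$, apply Baire to get an open $D_{n_0}$, then push into $O_p$ of a prosolvable open normal subgroup via \cref{opg}---is exactly the paper's argument. In the countably based case your $D_n$'s coincide with the paper's $C_\lambda$'s (indexed by tails of an enumeration of the $p'$-chief factors below $K$), and the conclusion $D_{n_0}^*\cap M\le O_p(M)=1$ is precisely how the paper finishes.

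The one genuine gap is the countability step, which you correctly flag but do not resolve. Your parenthetical ``passing to a countable cofinal chain of open normal subgroups of $G$'' does not work when $G$ is not countably based: no such cofinal chain exists, and if you pick an arbitrary countable descending chain $\{N_i\}$ with $\bigcap_i N_i=L\neq 1$, then your $D_{n_0}^*$ only centralizes the $p'$-factors appearing in that chain, not all $p'$-chief factors of $M$, so \cref{opg} does not apply. The uniform bound $\omega_p(gN,G/N)\le -\log_2 P_p(g,G)$ from \cref{bbb} is uniform in $N$ but depends on $g$, so it does not by itself produce a countable cover of $P$ by closed \emph{subgroups}. The paper handles this by a separate contrapositive reduction: if $G$ is not virtually pro-$p$, choose a countable family $\{N_i\}$ of open normal subgroups with the $p'$-parts of $|G/N_i|$ unbounded, set $L=\bigcap_i N_i$, and observe that $G/L$ is a countably based, $p$-positive profinite group that is still not virtually pro-$p$---contradicting the countably based case already established. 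Once you insert this reduction, your argument is complete and matches the paper's.
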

\begin{proof}
    We may assume $O_p(G)=1$, since $G$ is virtually pro-$p$ if, and only if, $G/O_p(G)$ is virtually pro-$p$. First, suppose that $G$ is countably based. By \cref{virtprosolv-2}, $G$ contains  an open normal prosolvable subgroup $K$. Thus, there exists a descending chain  $\{N_i\}_{i\in \mathbb N}$ of open normal subgroups of $G$ such that $N_0=G,$ $N_j=K$ for some $j\in \mathbb N,$ $\cap_{i\in \mathbb N}N_i=1$ and $N_i/N_{i+1}$ is a chief factor of $G/N_{i+1}$ for every $i\in \mathbb N.$    
  Let $P$ be a  $p$-Sylow subgroup of $G$. Moreover
   let $\Delta$ be the set of  $n \in \mathbb{N}$ such that $u\geq j$ and $N_{u}/N_{u+1}$ is a $p'$-chief factor and let $\Lambda$ be the set of cofinite subsets of $\Delta$. For every $\lambda \in \Lambda$, we have that $C_\lambda := \cap_{i \in \lambda} C_P(N_i/N_{i+1})$ is a closed subgroup of $P$ and, by \cref{bbb}, it follows that $P= \cup_\lambda C_\lambda$. By the Baire Category Theorem, there exist $\lambda \in \Lambda$, an open normal subgroup $M$ of $P$ and $g \in P$ such that $gM \subseteq C_\lambda$. Since $C_\lambda$ is a subgroup of $G,$ this implies that $M$ itself is contained in $C_\lambda$. In particular, we can find an open normal subgroup $N$ of $G$ contained in $K$ such that $M$ centralizes all the $p'$-chief factors of $N$. Given that $O_p(G)=1$, it follows from $M\cap N \leq O_p(N) \leq O_p(G)$ that $M \cap N = 1$ and, consequently, that $P$ is finite. Now we may apply \cref{sylowfg} and reach the desired conclusion. \\
    \indent Now, suppose $G$ is an uncountably based profinite group such that $P_p(g,G)>0$ for every $p$-element of $G$ and suppose, by contradiction, that $G$ is not virtually pro-$p$. For each open normal subgroup $N$ of $G$, denote by $\alpha(N)$ the $p'$-part of $|G/N|$. Note that $G$ is not virtually pro-$p$ if, and only if,  $\sup \alpha(N) = +\infty$. Now, we can choose a countable family $\{N_i\}_{i\in \mathbb N}$ of open normal subgroups such that $\sup_i \alpha(N_i) = +\infty$. Let $M= \cap_i N_i$ and consider $G/M$. On one hand, by construction, $G/M$ is not virtually pro-$p$ and, on the other, it is a countably based profinite group such that $P_p(G/M,gM) > 0$ for every $p$-element $gM$ of $G/M$, a contradiction with the previous paragraph.
\end{proof}

\begin{thm}\label{rand-p-elem}
	Let $P_p(G,G)$ be the probability that two randomly chosen elements of $G$ generate a pro-$p$ group. If $P_p(G,G)$ is positive, then 
	$G$ is virtually pro-$p.$
\end{thm}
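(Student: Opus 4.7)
The strategy is to reduce the two-variable hypothesis to single-element information via Fubini and then combine the machinery of \cref{42} with a measure-theoretic counting argument. By Fubini applied to the indicator of the closed set $\{(g,h)\in G\times G:\langle g,h\rangle\text{ is pro-}p\}$,
\[
P_p(G,G)=\int_G P_p(g,G)\,d\mu(g)>0,
\]
so the set $A=\{g\in G:P_p(g,G)>0\}$ has positive Haar measure. Since $\Omega_p(g,G)$ is non-empty only when $g$ is a $p$-element, $A\subseteq \Omega_p(G)$, and in particular $P_p(G)>0$.

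The first substantive step is to show that $G$ is virtually prosolvable. For $p$ odd this is immediate from \cref{secondocor}. For $p=2$ the single-variable hypothesis $P_2(G)>0$ is insufficient (as the counterexamples of \cref{pro-2} show), so one must exploit the pair condition. I would upgrade \cref{univbound} to the averaged estimate
\[
P_2(H,H)\leq (1-\varepsilon')\,P_2(H/N,H/N)
\]
for every finite group $H$ with non-abelian minimal normal subgroup $N$, with $\varepsilon'$ universal. A close inspection of the proof of \cref{univbound} shows that the inequality $P_2(z,H)\leq (1-\varepsilon)P_2(xN,H/N)$ actually holds for every $z\in \Omega_2(xN)$ whose $2^{t_1}$-th power has nontrivial first coordinate, and this condition is satisfied on a positive proportion of $z$'s (bounded below uniformly in $H$ and $N$). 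Summing over cosets of $N$ in $H$ yields the averaged bound, and iterating across the non-abelian chief factors of any refined finite quotient of $G$ forces $P_2(G,G)=\inf_N P_2(G/N,G/N)=0$ whenever $G$ fails to be virtually prosolvable, contradicting the hypothesis.

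With virtual prosolvability established, \cref{42} supplies for each $g\in A$ an open subgroup $H_g\leq G$ with $g\in O_p(H_g)$, so
\[
A\subseteq \bigcup_{H\text{ open in }G} O_p(H).
\]
Following the reduction in the proof of \cref{virtprop}, we may assume $G$ is countably based (the reduction preserves both $P_p(\cdot,\cdot)>0$ and the failure of virtual pro-$p$), so the union on the right is countable. If $G$ were not virtually pro-$p$, then no $O_p(H)$ could be open in $H$: any open pro-$p$ normal subgroup of $H$ is contained in $O_p(H)$ and would witness virtual pro-$p$-ness of $G$. Hence $|G:O_p(H)|=\infty$ and $\mu(O_p(H))=0$ for every open $H$, so the countable union has measure zero, contradicting $\mu(A)>0$. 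Therefore $G$ is virtually pro-$p$.

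The principal obstacle is the virtual prosolvability step at $p=2$: one must verify that the ``nontrivial first coordinate'' condition in the proof of \cref{univbound} holds for a positive proportion of $z\in \Omega_2(xN)$ even after conditioning on $z$ being a 2-element, and then use this to upgrade the single-$z$ estimate of \cref{univbound} to an averaged two-variable inequality. Once this is done, the rest of the argument is a clean measure-theoretic application of \cref{42}.
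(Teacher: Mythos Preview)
Your approach differs substantially from the paper's. The paper observes that a pro-$p$ group is pronilpotent, so $P_p(G,G)>0$ forces the probability that two random elements generate a pronilpotent subgroup to be positive; Wilson's theorem \cite{winilp} then yields an open normal pronilpotent subgroup $N$ directly, for every prime $p$ and with no case distinction. After reducing to $O_p(G)=1$, the supernatural order of $N$ is coprime to $p$, and for any pair $(x_1,x_2)$ generating a pro-$p$ subgroup, Sylow conjugacy in $N\langle x_1,x_2\rangle$ shows that $\{(g,h)\in x_1N\times x_2N:\langle g,h\rangle\text{ is pro-}p\}$ is exactly the single $N$-orbit $\{(x_1^n,x_2^n):n\in N\}$; positive measure of some such coset then forces $N$ finite.

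Your Fubini-plus-\cref{42}-plus-countable-union argument is sound once virtual prosolvability is in hand, but the $p=2$ step as written is a genuine gap. You assert that the ``nontrivial first coordinate'' condition in the proof of \cref{univbound} holds for a uniformly positive proportion of $z\in\Omega_2(xN)$, yet you do not prove this; one would have to control, uniformly over all triples $(H,N,x)$, the proportion of $n\in N$ for which $(nx)^{2^{t_1}}$ has trivial first coordinate \emph{while} $nx$ remains a $2$-element, and this is not immediate from the lemma as stated. The detour is in any case unnecessary: invoking Wilson's theorem at that point gives virtual pronilpotency (hence prosolvability) for all $p$ simultaneously, after which your remaining measure-theoretic argument goes through. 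The paper's route buys still more: pronilpotency together with the Sylow conjugacy trick replaces the appeal to \cref{42} and the countable-union argument entirely, yielding a much shorter proof.
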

\begin{proof}
	Suppose $P_p(G,G)>0.$ By the main result in \cite{winilp}, $G$ contains an open normal subgroup $N$ which is pronilpotent. If $H$ is an open normal pro-$p$ subgroup of $G$, it can be seen that $$P_p(G/H,G/H)=P_p(G,G),$$ because $\langle x,y\rangle$ is a pro-$p$ group if, and only if, $\langle x,y \rangle H/H$ is a pro-$p$ group, so, in particular, it is not restrictive to assume that $O_p(G)=1.$ Let $\Omega$ be the set of the pairs $(x_1N,x_2N)\in G/N\times G/N$ with the property that
	$\langle x_1N, x_2N\rangle$ is a $p$-subgroup of $G/N$
	and for any $\omega=(x_1N,x_2N)\in \Omega,$ let
	$$\Lambda_\omega=\{(x_1n_1,x_2n_2)\in x_1N_1\times x_2N_2\mid \langle x_1n_1, x_2n_2\rangle \text{ is a pro-$p$ subgroup of $G$}\}.$$ Moreover let
	$\Lambda=\{(x_1,x_2)\in G\times G\mid \langle x_1, x_2\rangle \text{ a pro-$p$ subgroup of $G$}\}.$
	
	Since $\Lambda=\cup_{\omega\in \Omega}\Lambda_\omega$ and $\Omega$ is finite, if $P_p(G,G)=\mu(\Lambda)>0,$ then there exists $\omega=(x_1N,x_2N)\in \Omega$ such that $\mu(\Lambda_\omega)>0.$ In particular $\Lambda_\omega\neq \emptyset$ so it is not restrictive to assume $(x_1,x_2)\in \Lambda_\omega.$ Since the order of $N$, as a supernatural number, is coprime with $p$, $(x_1n_1,x_2n_2)\in \Lambda_\omega$ if and only if $\langle x_1n_1,x_2n_2\rangle$ is a $p$-Sylow subgroup of
	$N\langle x_1,x_2\rangle$, and therefore, by the profinite generalization of the Sylow theorems, if and only if $\langle x_1,x_2\rangle=\langle x_1n_1,x_2n_2\rangle^n$ for some $n\in N.$ In other words, it follows that $\Lambda_\omega=\{(x_1^n,x_2^n)\mid n\in N\}$
	and, from $\mu(\Lambda_\omega)>0$, we conclude that $N$ is finite.
\end{proof}

Therefore, it remains simply to combine the results of this section to prove \cref{main}.

\begin{proof}[Proof of \cref{main}.]
    $(1) \Leftrightarrow (2) \Leftrightarrow (3)$ follows from \cref{virtprop} and $(3) \Leftrightarrow (4)$ follows from \cref{rand-p-elem}; it is clear that $(4) \Leftrightarrow (5)$.
\end{proof}

\end{document}